\newtheorem{thm}{Theorem}
\newdefinition{cor}{Corollary}
\newdefinition{example}{Example}
\newproof{pf}{Proof}
\newcommand {\umin} {u^{\textup{min}}}
\newcommand {\umax} {u^{\textup{max}}}
\newcommand {\G} {{\mathcal G}}
\newcommand {\ds} {\,{\rm d}{\mathrm s}}
  \newcommand {\bF} {{\mathbf f}}
   \newcommand{\bff}{\mathbf{f}}
  \newcommand{\R}{\mathbb{R}}
  \newdefinition{rmk}{Remark}
  \newdefinition{defn}[rmk]{Definition}
  \newcommand{\pd}[2]{\frac{\partial #1}{\partial #2}}
  \newcommand{\td}[2]{\frac{\mathrm d #1}{\mathrm d #2}}
\newcommand{\beq}{\begin{equation}}
\newcommand{\eeq}{\end{equation}}
\newcommand{\bfn}{{\bf n}}
\newcommand{\bfx}{{\bf x}}
\newcommand {\jinN} {j\in\mathcal N_i}
\newcommand{\RK}{{\rm RK}}
\newcommand{\FE}{{\rm FE}}
\newcommand{\WENO}{{\rm WENO}}
\newcommand{\GMC}{{\rm GMC}}
\def\ps@pprintTitle{%
  \let\@oddhead\@empty
  \let\@evenhead\@empty
  \def\@oddfoot{
    \footnotesize\itshape
    \hfill\today
  }%
  \let\@evenfoot\@oddfoot}
\begin{document}

\begin{frontmatter} 
  \title{Bound-preserving flux limiting for high-order 
    explicit Runge--Kutta time discretizations of hyperbolic
  conservation laws}

\author[TUDo]{Dmitri Kuzmin}
\ead{kuzmin@math.uni-dortmund.de}

\address[TUDo]{Institute of Applied Mathematics (LS III), TU Dortmund University\\ Vogelpothsweg 87,
  D-44227 Dortmund, Germany}

\author[KAUST]{Manuel Quezada de Luna\corref{cor1}}
\ead{manuel.quezada@kaust.edu.sa}
\cortext[cor1]{Corresponding author}

\author[KAUST]{David I. Ketcheson}
\ead{david.ketcheson@kaust.edu.sa}

\address[KAUST]{King Abdullah University of Science and Technology (KAUST)\\ Thuwal 23955-6900, Saudi Arabia}

\author[TUDo]{Johanna Gr\"ull}
\ead{jgruell@mathematik.tu-dortmund.de}

\journal{Elsevier}

\begin{abstract}
  We introduce a general framework for enforcing local or global
  maximum principles in high-order space-time discretizations of
  a scalar hyperbolic conservation law. We begin with sufficient
  conditions for a space discretization to be bound preserving
  (BP) and satisfy a semi-discrete maximum principle. Next, we
  propose a global monolithic convex (GMC) flux limiter which
  has the structure of a flux-corrected transport (FCT) algorithm
  but is applicable to spatial semi-discretizations and ensures
  the BP property of the fully discrete scheme for strong stability
  preserving (SSP) Runge-Kutta time discretizations. To circumvent
  the order barrier for SSP time integrators, we constrain the
  intermediate stages and/or the final stage of a general
  high-order RK  method using GMC-type
  limiters. In this work, our theoretical and numerical studies
  are restricted to explicit schemes which are provably
  BP for sufficiently small time steps. The new GMC limiting framework
  offers the possibility of relaxing the bounds of inequality
  constraints  to achieve higher accuracy
   at the cost of more stringent time step restrictions.
   The ability of the presented limiters to preserve global
   bounds and recognize well-resolved smooth solutions is 
  verified numerically for three representative RK methods
  combined with weighted essentially nonoscillatory (WENO)
  finite volume space discretizations of linear and nonlinear
  test problems in 1D.
\end{abstract}
\begin{keyword}
 hyperbolic conservation laws; positivity-preserving WENO schemes; SSP Runge-Kutta time stepping; flux-corrected transport; monolithic convex limiting
\end{keyword}
\end{frontmatter}

\section{Introduction}

High-resolution numerical schemes for hyperbolic conservation laws
are commonly equipped with mechanisms that guarantee preservation
of local and/or global bounds. Bound-preserving (BP) second-order
approximations can be constructed, e.g.,
using flux-corrected transport (FCT) algorithms \cite{fct1,zalesak79}
or total variation diminishing (TVD) limiters \cite{harten1,harten2}.
If the spatial semi-discretization is BP and time
integration is performed using a strong stability preserving (SSP)
Runge-Kutta method \cite{david_book,ssprev}, the fully discrete
method can be shown to satisfy the corresponding maximum principle.

Discretization methods that use limiters to enforce preservation
of \emph{local} bounds can be at most second-order accurate
\cite{zhang2011maximum}. In contrast, the imposition of \emph{global} bounds does
not generally degrade the rates of convergence to smooth
solutions. For example, the use of weighted
essentially nonoscillatory (WENO) reconstructions 
in the context of finite volume methods 
or discontinuous Galerkin (DG) methods 
both with additional limiting makes 
it possible to construct positivity-preserving schemes of very
high order in space \cite{zhang2011maximum}. However, the
requirement that the time integrator be SSP imposes a
fourth-order barrier on the overall accuracy of explicit Runge-Kutta
schemes and a sixth-order barrier on the accuracy of
implicit ones. Moreover, only the first-order accurate
backward Euler method is
SSP for arbitrarily large time steps \cite{david_book}.

The general framework of spatially partitioned Runge-Kutta (SPRK)
methods \cite{Ketcheson2015,Ketcheson2013} makes it possible to
combine different time discretizations in an adaptive manner.
Following the design of limiters for space
discretizations, the weights of a flux-based SPRK method
\cite{Ketcheson2013} or blending functions of a partition of
unity finite element method (PUFEM) \cite{hpfem} can be chosen to
enforce global or local bounds.
The weights of the SPRK scheme proposed in \cite{arbogast} are
defined using a
WENO smoothness indicator which reduces the magnitude
of undershoots/overshoots but does not ensure positivity
preservation. Examples of BP
limiters for high-order time discretizations can be
found in \cite{arbogast,Timelim,Feng2019,lee2010,CG-BFCT}.
Perhaps the simplest approaches to limiting in time are predictor-corrector
algorithms based on the FCT methodology. They have already
proven their worth in the context of multistep methods \cite{lee2010},
Runge-Kutta time discretizations \cite{CG-BFCT}, and space-time
finite element schemes \cite{Feng2019}.

The limiting tools proposed in the present paper constrain
high-order spatial semi-discretizations and/or stages of a general
Runge-Kutta (RK) method to satisfy discrete maximum principles for
cell averages. As an alternative to FCT algorithms, which are
defined at the fully discrete level and inhibit convergence to
steady-state solutions, we design a limiter that exploits the
BP property of convex combinations and is based on less restrictive
constraints than the convex limiting method introduced in \cite{convex}.
We prove that the new limiter ensures the validity of a semi-discrete
maximum principle for the space discretization.
Non-SSP stages of a high-order RK method can be
constrained using the same algorithm. In this work, we 
incorporate  GMC limiters into high-order explicit WENO-RK
discretizations of 1D hyperbolic problems. The same flux correction
procedures can be used for other space discretizations, such as
high-order Bernstein finite element approximations
\cite{Bern-DGFEM1, Bern-DGFEM2,CG-BFCT}.

The rest of this paper is organized as follows. In Section
\ref{sec:gmcl}, we constrain a high-order finite volume
discretization of a scalar conservation law using
a new general criterion for convex limiting in space. In Section
\ref{sec:bp-theory}, we prove the BP property of semi-discrete
schemes satisfying this criterion. To our knowledge, this is
the first theoretical result of this kind. In Section
\ref{subsec:gmc}, we derive the new convex limiter for
space discretizations. In Section \ref{sec:space_time_limiting},
we show
how this limiter can be used to constrain the final stage and
intermediate stages of a high-order RK method.
We also mention the possibility of slope limiting
for bound-violating high-order reconstructions. We discuss
the choice and
implementation of appropriate limiting strategies for
three kinds of RK methods in Section \ref{sec:BPschemes},
perform numerical experiments in Section \ref{sec:num}, and
draw preliminary conclusions in Section \ref{sec:conclusions}.
The details of the algorithms that we
use in our 1D numerical examples are provided
in \ref{appendix:RK_methods} and \ref{appendix:one-dim_GMC_ExRK}.

\section{Flux limiting for spatial semi-discretizations}\label{sec:gmcl}

We consider
high-order finite volume approximations to a 
hyperbolic conservation law of the form
\beq
\pd{u}{t}+\nabla\cdot\mathbf{f}(u)=0
\qquad \mbox{in}\quad \Omega \subset\R^d,\quad d\in\{1,2,3\}.\label{conslaw}
\eeq
For simplicity, we assume that the domain  $\Omega$ is a hyperrectangle and prescribe periodic boundary conditions on
$\partial\Omega$. The initial condition is given by
\beq
 u(\cdot,0)=u_0 \qquad\mbox{in}\ \Omega.
\eeq
We discretize $\Omega$ using a  mesh consisting
of $N_h$ computational cells $K_i,\ i=1,\ldots,N_h$. The unit outward
normal $\mathbf{n}_{ij}$ is constant on
each face $S_{ij}=\partial K_i\cap\partial K_j$ of the
boundary $\partial K_i=\bigcup_{j\in\mathcal N_i}S_{ij}$.
The volume of $K_i$ and 
area of $S_{ij}$ are denoted by $|K_i|$ and $|S_{ij}|$, respectively. The
set $\mathcal N_i$ contains the indices of von Neumann
neighbors of cell $i$, i.e., the indices of neighbor
cells $K_j,\ j\neq i$ such that $|S_{ij}|>0$.
A~spatially varying numerical flux across the edge or face $S_{ij}$ is
denoted by $H(\hat u_i(\bfx),\hat u_j(\bfx),\mathbf{n}_{ij})$, where $\hat u_i(\bfx)$
and $\hat u_j(\bfx)$ are traces of polynomial reconstructions
in $K_i$ and $K_j$, respectively, evaluated at $\bfx\in S_{ij}$.
Henceforth, for simplicity in the notation, we omit the dependence of
$\hat u$ on $\bfx$.

Using the divergence theorem and approximating the flux
$\mathbf{f}(u)\cdot\mathbf{n}_{ij}$ across $S_{ij}$
by a suitably chosen numerical flux $H(\hat u_i,\hat u_j,\mathbf{n}_{ij})$, we obtain a system of ordinary differential
equations 
\beq
|K_i|\td{u_i}{t}=-\sum_{j\in\mathcal N_i}\int_{S_{ij}}
H(\hat u_i,\hat u_j,\mathbf{n}_{ij}) \ds,\qquad
i\in\{1,\ldots,N_h\}\label{fv-update}
\eeq
for the cell averages $u_i$.
The general form of the Lax-Friedrichs (LF) flux across $S_{ij}$ is
\beq\label{LLF_flux}
H(\hat u_i,\hat u_j,\mathbf{n}_{ij}) =  \frac{\bF(\hat u_j)+\bF(\hat u_i)}{2}
\cdot\mathbf{n}_{ij} -\frac{\lambda_{ij}}{2}(\hat u_j-\hat u_i),
\eeq
where $\lambda_{ij}$ is a strictly positive upper bound for the wave speed of
the Riemann problem associated with face $S_{ij}$. In the local
Lax-Friedrichs (LLF) method, $\lambda_{ij}$ is a local upper
bound. In the classical LF method, the same global upper bound
$\lambda_{ij}=\lambda$ is taken for all faces.

\begin{figure}
    \center
    \includegraphics[width=0.65\textwidth]{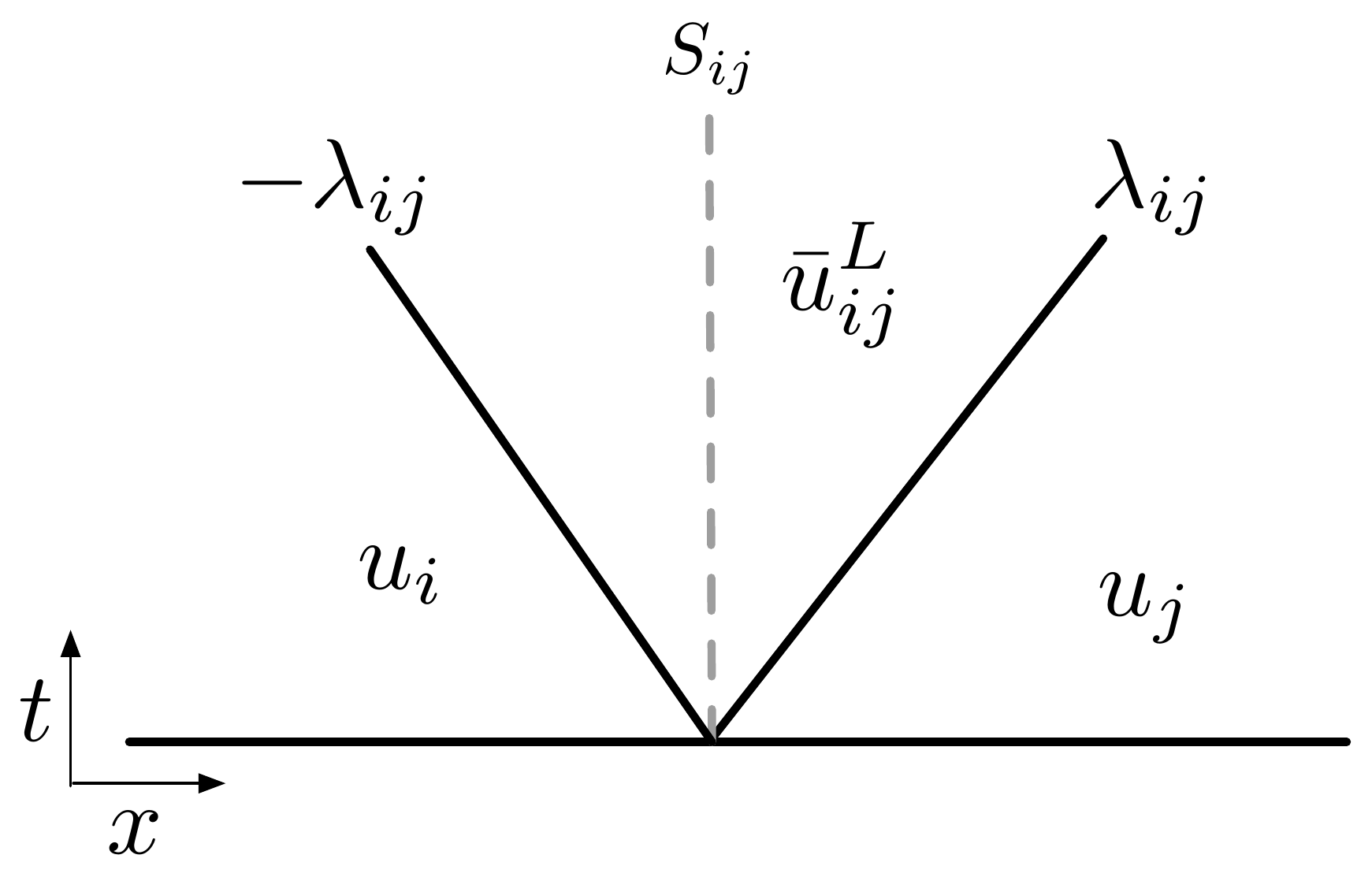}
    \caption{Structure of the Lax-Friedrichs approximate Riemann solution.
            The left and right states are the cell averages $u_i, u_j$,
            while the middle state is $\bar{u}^L_{ij}$.  The discontinuities
            separating these states travel at speeds $\pm \lambda_{ij}$.} \label{fig:riemann}
\end{figure}

\subsection{Bound-preserving schemes}\label{sec:bp-theory}

The first-order LLF scheme uses the cell averages $\hat u_i=u_i$ and $\hat u_j=u_j$ in \eqref{LLF_flux}.
The numerical flux $H(u_i,u_j,\mathbf{n}_{ij})$ can be derived from the conservation law
by assuming that the Riemann solution has the structure shown in Fig.~\ref{fig:riemann},
consisting of two traveling discontinuities.
The conservation law holds if the intermediate state (hereafter referred to
as the {\em bar state}) is given by \cite{leveque-fvm}
\beq\label{bar_states}
\bar u_{ij}^L=\frac{u_j+u_i}{2}
-\frac{\bF(u_j)-\bF(u_i)}{2\lambda_{ij}}\cdot
\mathbf{n}_{ij}.
\eeq
By the mean value theorem, the low-order (hence superscript $L$)
bar states $\bar u_{ij}^L$ satisfy \cite{convex}
\beq \label{lmpbar}
\min\{u_i,u_j\}\le \bar u_{ij}^L\le \max\{u_i,u_j\}.
\eeq
Using \eqref{bar_states} and the fact that
$\sum_{\jinN}|S_{ij}|\bff(u_i)\cdot\bfn_{ij} =0$, the
first-order LLF scheme can be written as
\beq\label{lo-barform}
|K_i|\td{u_i}{t} 
=\sum_{j\in\mathcal N_i}|S_{ij}|\lambda_{ij}(\bar u_{ij}^L-u_i)
=d_i(\bar u_i^L-u_i),
\eeq
where
\beq
\bar u_i^L=\frac{1}{d_i}\sum_{j\in\mathcal N_i}|S_{ij}|\lambda_{ij}\bar u_{ij}^L,
\qquad d_i=\sum_{j\in\mathcal N_i}|S_{ij}|\lambda_{ij}.
\eeq
The representation of the right-hand side in terms of the
jumps $\bar u_{ij}^L-u_i$
in the Riemann solution is known as the {\it fluctuation form} of the
finite volume scheme \cite{leveque-fvm}. The representation in terms
of the single jump $\bar u_i^L-u_i$ reveals that an equilibrium state
must be a convex combination of $\bar u_{ij}^L$.

\begin{defn}\cite{Guermond2016} 
  We say $\G\subset{\mathbb R}$ is an {\it invariant set} for the
  initial value problem \eqref{conslaw} if it holds that
  $$u(x,0)\in \G \ \ \forall \mathbf{x}\in\bar\Omega \implies u(x,t)\in \G \ \ \forall \mathbf{x}\in\bar\Omega$$
  for all $t>0$.
\end{defn}

\begin{defn}
We say a semi-discretization of \eqref{conslaw} is 
{\it bound preserving} (BP) w.r.t. $\G=[u^{\min},u^{\max}]$ if
$$u_i(0) \in \G\ \forall i\ \implies\
u_i(t) \in \G\ \forall i\ \forall t>0.$$
In this case we refer to $u^{\min}$ and $u^{\max}$ as \emph{global bounds}.
If $\G$ is an invariant set of the
 initial value problem, the scheme is called {\it invariant
  domain preserving} (IDP) \cite{Guermond2016,convex} or
(in case $\G$ is the positive orthant)
{\it positivity preserving} \cite{hu2013,ZhangShu2010,zhang2011maximum}.
\end{defn}

\begin{thm}[Semi-discrete maximum principle]\label{semi-dmp}
  Let 
  $$G=\{w\in\mathbb{R}^{N_h}\,:\, w_i\in[\alpha,\beta],\ i=1,\ldots,N_h\}$$
  and consider the initial value problem
  \begin{align} \label{ivp}
    u_i'(t)  = a_i(u)(g(u)_i - u_i),\qquad  u_i(0) & = u_i^0,
  \end{align}
  where $g$ is such that
  \begin{align} \label{uinG}
    u \in G \implies g(u) \in G
  \end{align}
  and
  \begin{align} \label{lambda-bound}
    0 \le a_i(u) \le C \ \ \ \forall u \in G.
  \end{align}
  Furthermore, assume that \eqref{ivp} has a unique solution for
  all $t>0$ for all $u^0 \in G$.  Then the solution satisfies
  \beq
    u(t) \in G \ \ \forall \ \ t\ge 0. \label{mp-result}
  \eeq
\end{thm}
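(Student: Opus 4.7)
The plan is to apply a scalar integrating factor to each component equation and exploit the resulting convex-combination representation. Rewriting \eqref{ivp} as $u_i' + a_i(u)\,u_i = a_i(u)\,g(u)_i$ and multiplying by $e^{A_i(t)}$ with $A_i(t) = \int_0^t a_i(u(s))\,ds$, integration from $0$ to $t$ yields
\[
u_i(t) = e^{-A_i(t)}\, u_i^0 + \int_0^t e^{A_i(s) - A_i(t)}\, a_i(u(s))\, g(u(s))_i\,ds.
\]
The nonnegativity assumption \eqref{lambda-bound} makes the weight $e^{-A_i(t)}$ and the density $e^{A_i(s)-A_i(t)} a_i(u(s))$ nonnegative, and a direct calculation shows that $\int_0^t e^{A_i(s)-A_i(t)} a_i(u(s))\,ds = 1 - e^{-A_i(t)}$, so the weights sum to one. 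Hence $u_i(t)$ is a convex combination of $u_i^0$ and the values $g(u(s))_i$ for $s \in [0,t]$.

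Next I would introduce the first-exit time $T = \sup\{t\ge 0 : u([0,t])\subseteq G\}$ and argue that $T=\infty$. For any $t\le T$ the hypothesis \eqref{uinG} gives $g(u(s))_i \in [\alpha,\beta]$ for all $s\in[0,t]$, and $u_i^0 \in [\alpha,\beta]$ as well, so the convex-combination identity forces $u_i(t)\in[\alpha,\beta]$. Closedness of $G$ together with continuity of $u$ then yields $u(T)\in G$.

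The remaining step, and the place I expect to need care, is propagating $u(t)\in G$ past $T$. The integrating-factor bound is self-referential: it requires $u(s)\in G$ throughout $[0,T+\delta]$ in order to conclude $u(T+\delta)\in G$. I would close this loop by a continuation argument: consider the modified IVP obtained by composing $a_i$ and $g$ with the componentwise projection $\pi$ onto $G$. The modified right-hand side satisfies the same hypotheses on all of $\R^{N_h}$, so the same integrating-factor argument applies unconditionally and produces a trajectory that remains in $G$ for all $t\ge 0$. Since the modified and original vector fields agree on $G$, this trajectory also solves \eqref{ivp}, and the assumed uniqueness forces it to coincide with $u$. Therefore $u(t)\in G$ for every $t\ge 0$, which is \eqref{mp-result}.

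The convex-combination identity is routine — the genuine work is in the continuation/invariance step. An alternative to the projection trick is a boundary-tangency contradiction: if some component first exits at $T$ through $u_k(T)=\beta$, then $u_k'(T) = a_k(u(T))(g(u(T))_k - \beta) \le 0$ by \eqref{uinG} and \eqref{lambda-bound}, ruling out a strict crossing; combined with a short-time bootstrap this also yields $T=\infty$. Either variant relies on the same structural observation, namely that the vector field is inward-pointing on $\partial G$ because $g$ maps $G$ into $G$ and $a_i\ge 0$.
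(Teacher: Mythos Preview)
Your argument is correct. The paper takes a much shorter route: it observes that for $u\in G$ and any $\epsilon<1/C$ the forward Euler step
\[
u_i+\epsilon u_i' \;=\; (1-\epsilon a_i(u))\,u_i+\epsilon a_i(u)\,g(u)_i
\]
is a convex combination of $u_i\in[\alpha,\beta]$ and $g(u)_i\in[\alpha,\beta]$, hence lies in $[\alpha,\beta]$. This is exactly the subtangential (Nagumo) condition for forward invariance of the box $G$, and the paper then cites a lemma of Horv\'ath to conclude. Your integrating-factor identity is the continuous-time analogue of the same convex combination, and your projection-plus-uniqueness device is a self-contained replacement for the cited invariance lemma (you are implicitly using Peano existence for the projected system, which is routine once $a_i$ and $g$ are continuous on $G$). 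The trade-off is that the paper's proof is three lines but outsources the invariance machinery to a reference, whereas yours is longer but stands on its own. Your alternative boundary-tangency variant---showing $u_k'(T)\le 0$ at a first upper exit---is essentially the informal content of Nagumo's lemma, so that version is in fact very close in spirit to what the paper does.
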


\begin{proof}
If $a_i$ is independent of $i$, then this is a direct application of
Nagumo's lemma (see e.g. \cite[Theorem~4.1]{deimling} or
\cite[Theorem~3.1]{blanchini}.  For the general
case, observe that for $u\in G$ and $\epsilon<1/C$ we have
$$
    u_i(t) + \epsilon u_i'(t) = (1-\epsilon a_i(u(t)))u_i + \epsilon a_i(u(t)) g(u)_i \in [\alpha,\beta].
$$
The result then follows from \cite[Lemma 2]{horvath2005}.
\end{proof}
The assumption of existence and uniqueness above can be avoided by invoking
other assumptions such as Lipschitz continuity; see \cite{deimling}.

Applying the theorem above to the first-order Lax-Friedrichs
scheme \eqref{lo-barform}, we obtain
\begin{cor}
   Let global bounds $\umin, \umax$ be given and let 
    $$G=\{w\in\mathbb{R}^{N_h}\,:\, w\in[\umin,\umax],\ i=1,\ldots,N_h\}.$$
    Assume that $\lambda_{ij}$ is uniformly bounded on $G$.
    Then the scheme \eqref{lo-barform} is bound preserving w.r.t.
    $[\umin,\umax]$.
\end{cor}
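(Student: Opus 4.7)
The plan is to reduce the corollary to a direct application of Theorem \ref{semi-dmp}. First, I would rewrite the scheme \eqref{lo-barform} in the form required by the theorem, namely
\beq
\td{u_i}{t} = a_i(u)\bigl(g(u)_i - u_i\bigr),
\eeq
by dividing through by $|K_i|$ and setting
$a_i(u) := d_i/|K_i|$ and $g(u)_i := \bar u_i^L$, with $\alpha=\umin$, $\beta=\umax$.

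Next I would verify the two hypotheses of Theorem \ref{semi-dmp}. For the bound \eqref{lambda-bound}, since $\lambda_{ij}$ is uniformly bounded on $G$ by assumption and $|S_{ij}|,|K_i|$ are fixed geometric quantities, $a_i(u)=|K_i|^{-1}\sum_{j\in\mathcal N_i}|S_{ij}|\lambda_{ij}$ is a finite nonnegative sum bounded uniformly in $u\in G$; this yields a common constant $C$. For the invariance \eqref{uinG}, I would invoke \eqref{lmpbar}: for any $u\in G$, each bar state satisfies $\bar u_{ij}^L\in[\min(u_i,u_j),\max(u_i,u_j)]\subseteq[\umin,\umax]$. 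Since
\beq
g(u)_i=\bar u_i^L=\frac{1}{d_i}\sum_{j\in\mathcal N_i}|S_{ij}|\lambda_{ij}\,\bar u_{ij}^L
\eeq
is a convex combination of quantities in $[\umin,\umax]$ (assuming $d_i>0$, which holds since $\lambda_{ij}>0$), we get $g(u)_i\in[\umin,\umax]$, hence $g(u)\in G$.

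For the existence-uniqueness hypothesis, I would appeal to the remark following Theorem \ref{semi-dmp}: the right-hand side of \eqref{lo-barform} is a smooth function of $u$ on $G$ (the fluxes $\bF$ are typically at least $C^1$, and $\lambda_{ij}$ can be taken locally Lipschitz in its standard definitions), so local Lipschitz continuity guarantees a unique solution at least as long as the trajectory remains in $G$. Combined with the invariance established above, a standard continuation argument then extends the solution for all $t\ge 0$.

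The only genuine obstacle is the degenerate case $d_i=0$, which would make $g(u)_i$ undefined and $a_i(u)=0$; this is harmless since then $u_i'(t)=0$ trivially and $u_i(t)=u_i(0)\in[\umin,\umax]$ is immediate. Once these observations are assembled, Theorem \ref{semi-dmp} delivers $u(t)\in G$ for all $t\ge 0$, which is precisely the BP property with respect to $[\umin,\umax]$.
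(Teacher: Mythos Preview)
Your proposal is correct and follows exactly the paper's approach: identify $a_i=d_i/|K_i|$ and $g(u)_i=\bar u_i^L$, verify \eqref{lambda-bound} from the assumed uniform bound on $\lambda_{ij}$, and verify \eqref{uinG} via \eqref{lmpbar} and the convex-combination structure of $\bar u_i^L$. Your treatment is actually more thorough than the paper's, which leaves the existence/uniqueness hypothesis and the degenerate case $d_i=0$ implicit.
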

\begin{proof}
    The first-order scheme \eqref{lo-barform} is 
    of the form \eqref{ivp} with $a_i=d_i/|K_i|$ and $g(u)_i = \bar{u}_i$.
    The assumption on $\lambda_{ij}$ ensures that
    condition \eqref{lambda-bound} holds.
    The validity of \eqref{uinG} follows from the fact that
    $\bar u_i=\bar u_i^L$ is a convex combination of the bar states
    $\bar u_{ij}^L$ satisfying~\eqref{lmpbar}.
\end{proof}

Let $\mathcal G=[u^{\min},u^{\max}]$ be an invariant set of
problem \eqref{conslaw}. Define the range
$\mathcal G_i=[u_i^{\min},u_i^{\max}]\subseteq\mathcal G$
of numerically admissible states using (global or
local) bounds $u_i^{\min}$ and $u_i^{\max}$ such that
  \beq\label{local-bounds}
u^{\min}\le 
u_i^{\min}\le\min_{j\in\mathcal V_i}u_j\le u_i\le
\max_{j\in\mathcal V_i}u_j\le u_i^{\max}\le u^{\max},
\eeq
where $\mathcal V_i$ is the set containing the index $i$ and
the indices of all cells that share a vertex with~$K_i$.

\begin{defn}
  A BP scheme of the form \eqref{ivp} is called {\it local extremum
    diminishing} (LED) \cite{jameson1,jameson2}  w.r.t.
  $\mathcal G_i(t)=[u_i^{\min}(t),u_i^{\min}(t)]\subset
[u^{\min},u^{\max}]$ at $t\ge 0$ if
  $$u_j\in \G_i \ \ \forall j\in\mathcal V_i
  \implies g_i(u)\in \G_i.$$
\end{defn}
Because of \eqref{lmpbar}, the first-order scheme \eqref{lo-barform} is
LED w.r.t. any $\G_i=[u_i^{\min},u_i^{\min}]$ such that $u_i,u_j\in\G_i$.

\subsection{Bound-preserving limiters}\label{subsec:gmc}

The accuracy of the low-order approximation can be
improved by using
anfidiffusive fluxes $F_{ij}^*$ to correct the bar states
$\bar u_{ij}^L$.
The resulting generalization
of \eqref{lo-barform} and \eqref{fv-update} is given by
\beq\label{afc-barform}
|K_i|\td{u_i}{t}=
\sum_{j\in\mathcal N_i}|S_{ij}|\lambda_{ij}(\bar u_{ij}^*-u_i)=d_i(\bar u_i^*-u_i).
\eeq
The flux-corrected intermediate states $\bar u_{ij}^*$ and $\bar u_i^*$
are defined as follows:
\beq\label{barstar}
\bar u_{ij}^*=\bar u_{ij}^L+\frac{F_{ij}^*}{|S_{ij}|\lambda_{ij}},\qquad
\bar u_i^*=\frac{1}{d_i}\sum_{j\in\mathcal N_i}|S_{ij}|\lambda_{ij}\bar u_{ij}^*
=\bar u_i^L+\frac{1}{d_i}\sum_{j\in\mathcal N_i}F_{ij}^*.
\eeq
For example, the high-order finite volume scheme \eqref{fv-update}
is recovered for $F_{ij}^*=F_{ij}$, where
\beq\label{Fijdef}
F_{ij}=\int_{S_{ij}}[H(u_i,u_j,\mathbf{n}_{ij})
-H(\hat u_i,\hat u_j,\mathbf{n}_{ij})]\ds.
\eeq
Introducing a free parameter $\gamma>0$, we
limit the fluxes $F_{ij}$ in a manner
which  guarantees that
\beq\label{BPconstr2}
(1+\gamma)(u_i^{\min}-u_i)\le \bar u_i^*-u_i\le(1+\gamma)(u_i^{\max}-u_i).
\eeq
The resulting scheme is BP for any $\gamma>0$. Indeed,
\eqref{afc-barform} can be written in the form \eqref{ivp} with
\beq\label{afc-coeff}
 a_i=(1+\gamma)\frac{d_i}{|K_i|},\qquad
g_i(u)=u_i+\frac{\bar u_i^*-u_i}{1+\gamma}\in[u_i^{\min},u_i^{\max}].
\eeq

Let us now discuss the way to calculate the limited counterparts
$F_{ij}^*$ of the fluxes $F_{ij}$.
Recalling the definition \eqref{barstar} of $\bar u_i^*$, we find that
the inequality constraints \eqref{BPconstr2} hold if and only if
  \beq\label{fluxconstr}
 Q_i^-\le 
\sum_{j\in\mathcal N_i}F_{ij}^*\le Q_i^+,
\eeq
where
 \beq\label{bounds_glob}
  Q_i^-=d_i[(u_i^{\min}-\bar u_i^L)+\gamma (u_i^{\min}-u_i)],\quad
  Q_i^+=d_i[(u_i^{\max}-\bar u_i^L)+\gamma (u_i^{\max}-u_i)].
  \eeq
  The use of $\gamma>0$ in  \eqref{bounds_glob}
  relaxes the bounds $Q_i^{\pm}$ in a way that makes
  the flux constraints \eqref{fluxconstr} less restrictive. As a result,
  higher accuracy can be achieved using larger values of $\gamma$.
  However, the CFL condition for explicit schemes becomes more
  restrictive, as we show in Section \ref{sec:space_time_limiting}
  below.
  
  By definition of $d_i$ and $\bar u_i^L$, the bounds $Q_i^\pm$
of the flux constraints   \eqref{fluxconstr} 
  can be decomposed into
 \beq\label{bounds_loc}
  Q_{ij}^-=|S_{ij}|\lambda_{ij}[(u_i^{\min}-\bar u_{ij}^L)+\gamma(u_i^{\min}-u_i)],\quad
  Q_{ij}^+=|S_{ij}|\lambda_{ij}[(u_i^{\max}-\bar u_{ij}^L)+\gamma (u_i^{\max}-u_i)]. \eeq
  Localized flux limiting algorithms \cite{cotter,Guermond2018,convex,CG-BFCT} use such decompositions to replace inequality constraints for sums of fluxes by sufficient conditions that make it possible to constrain each flux independently. The {\it local monolithic convex} (LMC) limiting algorithm proposed in \cite{convex} yields
\beq\label{fij-bar}
F_{ij}^*=\begin{cases}
\min\,\{Q_{ij}^+,F_{ij},-Q_{ji}^-\}
& \mbox{if}\ F_{ij}>0,\\[0.25cm]
\max\,\{Q_{ij}^-,F_{ij},-Q_{ji}^+\}
& \mbox{otherwise},
\end{cases}
\eeq
where $F_{ij}$ is the antidiffusive flux defined by \eqref{Fijdef}.
The LMC limiter is designed to guarantee that
\beq
Q_{ij}^-\le F_{ij}^*\le Q_{ij}^+,\qquad F_{ji}^*=-F_{ij}^*,\qquad
Q_{ji}^-\le F_{ji}^*\le Q_{ji}^-.
\eeq
 Hence, the flux-corrected scheme is
conservative and satisfies conditions \eqref{fluxconstr}
for $i=1,\ldots,N_h$.

To avoid a potential loss of accuracy due to localization of the flux
constraints, we introduce a  {\it global monolithic convex} (GMC) 
limiter that enforces  \eqref{fluxconstr} using the following algorithm:
\begin{enumerate}
\item Calculate the sums of positive and negative antidiffusive fluxes
  \beq\label{p-def}
  P_i^+=\sum_{j\in\mathcal N_i}\max\{0,F_{ij}\},\qquad
  P_i^-=\sum_{j\in\mathcal N_i}\min\{0,F_{ij}\}.
  \eeq
\item Use the sums $P_i^\pm$ and the bounds $Q_i^\pm$ defined by
  \eqref{bounds_glob} to calculate
\beq  
R_i^+=\min\left\{1,\frac{Q_i^+}{P_i^+}\right\},\qquad
R_i^-=\min\left\{1,\frac{Q_i^-}{P_i^-}\right\}.
\eeq
\item Calculate the limited antidiffusive fluxes
  $F_{ij}^*=\alpha_{ij}F_{ij}$, where
\beq \label{alpha-def}
\alpha_{ij}=\begin{cases}
  \min\{R^+_i,R^-_j\} & \mbox{if}\ F_{ij}>0,\\
  1 & \mbox{if}\ F_{ij}=0,\\
  \min\{R^-_i,R^+_j\} & \mbox{if}\ F_{ij}<0.
\end{cases}
\eeq
\end{enumerate}
This limiting strategy is based on Zalesak's FCT algorithm \cite{zalesak79}
but the GMC bounds \eqref{bounds_glob} are independent of the time step, and
the validity of a discrete maximum principle is guaranteed for any strong
stability preserving (SSP) Runge-Kutta time integrator
\cite{david_book,ssprev}. In Section
\ref{sec:space_time_limiting}, we verify this
claim for a single forward Euler step
and discuss flux limiting for general Runge-Kutta stages.


\section{Flux limiting for Runge-Kutta methods}\label{sec:space_time_limiting}

Given a BP space discretization of the form \eqref{ivp} and a
time step $\Delta t>0$, the first-order accurate forward Euler (FE)
method advances the cell averages $u_i,\ i=1,\ldots,N_h$ in time as follows:
\beq\label{FEstep}
u^{n+1}=u_i^n+\nu_i(\bar u_i^n-u_i^n)
=(1-\nu_i)u_i^n+\nu_i\bar u_i^n,
\eeq
where $\nu_i=\Delta t a_i>0$ is a generalized `CFL' number.
If $\nu_i\le 1$ then $u_i^{n+1}$ is a convex combination of $u_i^n$
and $\bar u_i^n$. It follows that $u_i^{n+1}\in [u_i^{\min},u_i^{\max}]$
if $u_i^n,\bar u_i^n\in [u_i^{\min},u_i^{\max}]$. In view of \eqref{afc-coeff},
the fully discrete version of the semi-discrete scheme
\eqref{afc-barform} equipped with the LMC or GMC flux limiter
is BP if 
\beq\label{cfl}
(1+\gamma)
\frac{\Delta td_i}{|K_i|}\le 1.
\eeq
The low-order LLF scheme \eqref{lo-barform} and Zalesak's FCT method
are BP for time steps satisfying \eqref{cfl} with $\gamma=0$. The
FCT fluxes $F_{ij}^*$ are calculated using algorithm
\eqref{p-def}--\eqref{alpha-def} with global bounds
\beq\label{bounds_fct}
Q_i^-=\frac{|K_i|}{\Delta t}(u_i^{\min}-u_i^{\rm FE}),\qquad
Q_i^+=\frac{|K_i|}{\Delta t}(u_i^{\max}-u_i^{\rm FE})
\eeq 
depending on $u_i^{\rm FE}=u_i^n+\frac{\Delta t}{|K_i|}
d_i(\bar u_i^L-u_i^n)$. A localized version of FCT is defined by
\eqref{fij-bar} with \cite{LimiterDG}
 \beq\label{bounds_locFCT}
  Q_{ij}^-=\frac{|K_i|\,|S_{ij}|}{\Delta t|\partial K_i|}(u_i^{\min}-u_i^{\rm FE}),\qquad
  Q_{ij}^+=\frac{|K_i|\,|S_{ij}|}{\Delta t|\partial K_i|}(u_i^{\max}-u_i^{\rm FE}).
  \eeq
 Flux limiting techniques of this kind were considered, e.g., in
 \cite{Guermond2018,convex,LimiterDG}. The main advantage of
 localized convex limiting lies in its applicability to
 systems. Since the main focus of the present paper is on
 scalar conservation laws, we restrict further discussion
 to global MC and FCT limiters.

 \begin{rmk}
   Since the FCT bounds \eqref{bounds_fct} and \eqref{bounds_locFCT}
   depend on the time step and on the low-order predictor
   $u_i^{\rm FE}$, the resulting nonlinear schemes have no
   semi-discrete or steady-state counterparts.
   \end{rmk}
 
 \subsection{High-order Runge-Kutta methods}
  Let us rewrite \eqref{afc-barform} as
 \beq\label{afcRK}
 |K_i|\td{u_i}{t}=-\sum_{j\in\mathcal N_i}|S_{ij}|H_{ij}^*(\hat u),\qquad
H_{ij}^*\left(\hat u\right)=H(u_i,u_j,\mathbf{n}_{ij})-
\frac{F_{ij}^*(\hat u)}{|S_{ij}|},
\eeq
where $\hat u$ is a high-order piecewise-polynomial reconstruction from
the cell averages $u_1,\ldots, u_{N_h}$.

 We now discretize \eqref{afc-barform} in time using an explicit
 $M$-stage Runge-Kutta (RK) method with coefficients denoted by the vectors
 $c, b$ and matrix $A$.
 The RK stage approximations
$y_i^{(m)}\approx u_i(t^n+c_m\Delta t)$ to the cell averages
are given by 
\begin{subequations}\label{stage-butcher}
\begin{align}
y_i^{(1)} &= u_i^n,\label{rk-y1} \\
y_i^{(m)} &= u_i^n-\frac{\Delta t}{|K_i|}\sum_{j\in\mathcal N_i}
|S_{ij}|\sum_{s=1}^{m-1}a_{ms}H_{ij}^*\left(\hat{y}^{(s)}\right),
\qquad m=2,\ldots, M.\label{rk-ym}
\end{align}
\end{subequations}
The RK solution $u_i^{\rm RK}\approx u_i(t^n+\Delta t)$ is defined as
\begin{align}
  u_i^{\rm RK}=u_i^n-\frac{\Delta t}{|K_i|}\sum_{j\in\mathcal N_i}|S_{ij}|H_{ij}^{\rm RK},\qquad
 H_{ij}^{\rm RK} = \sum_{m=1}^Mb_mH_{ij}^*\left(\hat{y}^{(m)}\right).
  \label{rk-update}
\end{align}

Note that the intermediate cell averages $y_i^{(2)},\ldots,
y_i^{(M)}$ are generally not BP even for the low-order
LLF scheme, i.e., in the case $\hat y_i^{(m)}\equiv y_i^{(m)}$.
However, if a RK stage can be written as a convex combination
of forward Euler steps \eqref{FEstep}, then it inherits their BP properties.
It is therefore useful to distinguish between three classes
of RK methods. To facilitate their definition, let
\beq
X(\mu)=\left(I+\mu A\right)^{-1},
\eeq
where $\mu>0$ is a stability parameter and $I$ is the identity matrix.
Furthermore, let $e$ denote the vector of length $M$ with all entries equal to unity.

\subsubsection{Strong stability preserving (SSP) methods}

An explicit Runge-Kutta method is strong stability preserving (SSP) if
it is possible to write the stage approximations
$y_i^{(2)},\ldots,y_i^{(M)}$ and the final solutions $u_i^{\rm RK}$
 as convex combinations of forward
 Euler predictors $u_i^{(1)},\ldots,u_i^{(\tilde M)}$. For $\tilde M\in\{1,2,3,5\}$,
 the optimal explicit SSP-RK methods use
 \begin{subequations}
 \begin{align}
 u_i^{(1)}&=u^n,\\
 u_i^{(m)}&=u_i^{(m-1)}-\frac{\Delta t}{|K_i|}\sum_{j\in\mathcal N_i}|S_{ij}|
 H_{ij}^*(u_i^{(m-1)}) ,\qquad m=2,\ldots,\tilde M.
 \end{align}
\end{subequations}
If the space discretization \eqref{afcRK} is BP, then so is the resulting full
discretization. The maximum order of such SSP-RK time integrators
is 4 in the explicit case and 6 in the implicit case \cite{david_book}.
For comparison with the next class of methods, we note that SSP methods
satisfy the entrywise inequalities
\begin{align} \label{SSP-stages}
AX(\mu) \ge 0, \ \ \ \ AX(\mu)e \le e,
\end{align}
as well as
\begin{align} \label{SSP-update}
b^T X(\mu) \ge 0, \ \ \ \ b^T X(\mu) e \le 1,
\end{align}
with an SSP coefficient $\mu>0$ such that the fully discrete
scheme is BP for $\Delta t\le \mu \Delta t^{\rm FE}$ if the forward
Euler method is BP for $\Delta t\le\Delta t^{\rm FE}$; see
\cite{david_book} for details.

\subsubsection{Internal SSP methods}
A larger class of RK methods satisfy \eqref{SSP-stages} for some $\mu >0$
but may violate \eqref{SSP-update}. That is, all intermediate approximations
$u_i^{(1)},\ldots,u_i^{(\tilde M)}$ are BP for $\Delta t\le \mu
\Delta t^{\rm FE}$  but the weights of the final linear combination
$u_i^{\rm RK}=\sum_{m=1}^{\tilde M}\beta_mu_i^{(m)}$ are not necessarily positive.
The family of such internal SSP methods includes all SSP schemes, but
also many other methods, such as
extrapolation methods based on the explicit Euler method (see,
e.g., \cite[Section II.9]{hairerI}), whose implementation details
are presented in Algorithm~\ref{alg:extrap} and Table~1.
We use these methods in some numerical experiments in Section~\ref{sec:num}.
They have an advantage over SSP methods in that they can be constructed to have
any desired order of accuracy.  Since the 
final stage of these methods is not BP, we constrain it using a flux
limiter as described in Section \ref{sec:limiting_last}. 

\subsubsection{General RK methods}

A  high-order Runge-Kutta method with Butcher stages of the form
\eqref{rk-y1}--\eqref{rk-ym} and \eqref{rk-update} is generally
not BP. Hence,
flux limiting may be required in intermediate stages and/or in the
final stage.

\begin{algorithm}\caption{Explicit Euler extrapolation ({\bf Ex-Euler}) for
  $\td{u}{t}=F(u)$}
\label{alg:extrap}
\begin{algorithmic}

\State $y^{(1)} := u^n$
\State $m:=1$
\For{$s = 1 \to S$}
    \State $m := m+1$
    \State $y^{(m)} := u^n + \frac{\Delta t}{s}F(u^n)$
    \For{$k=2 \to s-1$}
        \State $m := m+1$
        \State $y^{(m)} := y^{(m-1)} + \frac{\Delta t}{s}F(y^{(m-1)})$
    \EndFor
\EndFor
\State $M:=m$
\State $u^{n+1} := u^n + \Delta t\sum_{m=1}^M b_m F(y^{(m)})$
\end{algorithmic}
\end{algorithm}

\begin{table}[h!]\label{extrapolation_weights}
  \vspace{0.75cm}
\begin{tabular}{c|c} \hline
Order & Weights $[b_1,\ldots,b_M]$\\ \hline
 2 & $[0, 1]$ \\
 3 & $[0, -2, 3/2, 3/2]$ \\
 4 & $[0, 2, -9/2, -9/2, 8/3, 8/3, 8/3]$ \\
 5 & $[0, -4/3, 27/4, 27/4, -32/3, -32/3, -32/3, 125/24, 125/24, 125/24, 125/24]$ \\
\hline
\end{tabular}
\caption{Weights for extrapolation methods, to be used in Algorithm \ref{alg:extrap}.}
\end{table}

\subsection{Flux limiting for the final RK stage}\label{sec:limiting_last}

If the employed time-stepping method is not SSP and/or the space discretization
\eqref{afcRK} uses the unlimited antidiffusive fluxes $F_{ij}^*=F_{ij}$, 
the bound-preserving low-order FE-LLF scheme
\beq\label{bp-update}
u_i^{\rm FE}=u_i^n-\frac{\Delta t}{|K_i|}\sum_{j\in\mathcal N_i}|S_{ij}|
H_{ij}^{\rm FE},\qquad H_{ij}^{\rm FE}=H(u_i^n,u_j^n,\mathbf{n}_{ij})
\eeq
can be combined with the final stage \eqref{rk-update} of the high-order
RK-LLF method
\beq
u_i^{\rm RK}=u_i^n-\frac{\Delta t}{|K_i|}\sum_{j\in\mathcal N_i}|S_{ij}|
H_{ij}^{\rm RK}=u_i^{\rm FE}+\frac{\Delta t}{|K_i|}\sum_{j\in\mathcal N_i}
F_{ij}^{\rm RK},\qquad  F_{ij}^{\rm RK}=
|S_{ij}|(H_{ij}^{\rm FE}-H_{ij}^{\rm RK})
\eeq
to produce a nonlinear flux-limited approximation of the form
\beq\label{fctgmc}
u_i^{n+1}=u_i^n-\frac{\Delta t}{|K_i|}\sum_{j\in\mathcal N_i}
|S_{ij}|\left[\alpha_{ij}H_{ij}^{\rm RK}+(1-\alpha_{ij})H_{ij}^{\rm FE}\right]
=u_i^{\rm FE}+\frac{\Delta t}{|K_i|}\sum_{j\in\mathcal N_i}
\alpha_{ij}F_{ij}^{\rm RK}.
\eeq
This fully discrete scheme reduces to \eqref{bp-update} for
$\alpha_{ij}=0$ and to \eqref{rk-update} for $\alpha_{ij}=1$. It is BP
w.r.t.\ the admissible set $\mathcal G_i=[u_i^{\min}, u_i^{\max}]$ if the 
definition of the correction factors $\alpha_{ij}$ guarantees that
\beq\label{BPfinal}
u_i^{\min}\le u_i^{n+1}\le u_i^{\max}.
\eeq

Zalesak's FCT method \cite{zalesak79} preserves the  BP property of 
 $u_i^{\rm FE}$
 using algorithm \eqref{p-def}--\eqref{alpha-def} to calculate
correction factors $\alpha_{ij}$ such that the limited fluxes
$F_{ij}^*=\alpha_{ij}F_{ij}^{\rm RK}$ satisfy
\beq Q_i^-=
\frac{|K_i|}{\Delta t}\left(u_i^{\min}-u_i^{\rm FE}\right)\le
\sum_{j\in\mathcal N_i}
F_{ij}^*\le
\frac{|K_i|}{\Delta t}\left(u_i^{\max}-u_i^{\rm FE}\right)=Q_i^+.\eeq
We remark that the temporal accuracy of classical
FCT schemes is restricted to second order. To our knowledge,
the first combination of FCT with high-order RK methods was
considered in \cite{CG-BFCT}.

As an alternative to FCT, we propose a flux limiter
that calculates $F_{ij}^*$ using the GMC bounds \eqref{bounds_glob}
in algorithm
\eqref{p-def}--\eqref{alpha-def}.
Invoking  \eqref{afc-coeff}, the flux-corrected scheme can
again be written as
$$
u_i^{n+1}
=u_i^n+(1+\gamma)\frac{\Delta t d_i}{|K_i|}\left(\bar u_i^{n}-u_i^n\right),
$$
where $\bar u_i^n\in[u_i^{\min}, u_i^{\max}]$. This  proves that
the fully discrete scheme is BP for time steps satisfying \eqref{cfl}.

\begin{rmk}
Update  \eqref{fctgmc} is mass conservative in the sense that  
$\sum_{i=1}^{N_h}|K_i|u_i^{n+1}=\sum_{i=1}^{N_h}|K_i|u_i^n$ because the
correction factors satisfy the symmetry condition
$\alpha_{ij}=\alpha_{ji}$ and the fluxes sum to zero.
\end{rmk}

\begin{rmk}\label{remark:fct_vs_gmc}
  Unlike FCT-like predictor-corrector approaches, the GMC limiter produces correction factors $\alpha_{ij}$ that do not depend on $\Delta t$. Consequently, this 
flux limiting strategy leads to well-defined nonlinear discrete problems and does not inhibit 
  convergence to steady-state solutions. 
\end{rmk}

\subsection{Flux limiting for intermediate RK stages}\label{sec:stagewise}

If the BP property needs to be enforced not only at the final stage
\eqref{rk-update} but also at intermediate stages \eqref{rk-ym}
of the Runge-Kutta method, flux-corrected approximations of the form
\beq\label{stage_butcher_sw}
y_i^{*,(m)}=
u_i^n-\frac{\Delta t}{|K_i|}\sum_{\jinN}\left[c_m|S_{ij}|H_{ij}^{\rm FE}
  -F_{ij}^{*,(m)}\right]=y_i^{{\rm FE},(m)}+
\frac{\Delta t}{|K_i|}\sum_{\jinN}F_{ij}^{*,(m)}
\eeq
can be calculated using 
\beq\label{stage_butcher_yL}
y_i^{{\rm FE},(m)}=
u_i^n-\frac{c_m\Delta t}{|K_i|}\sum_{\jinN}|S_{ij}|H_{ij}^{\rm FE},\qquad
F_{ij}^{*,(m)}=\alpha_{ij}^{(m)}F_{ij}^{(m)},
\eeq
 where
\beq\label{RK-stage_flux}
F_{ij}^{(m)}=c_m|S_{ij}|H_{ij}^{\rm FE}-\sum_{s=1}^{m-1}a_{ms}\int_{S_{ij}}
H_{ij}\big(\hat{y}^{(s)}\big)\ds, 
\eeq
is the antidiffusive flux. 
Here we used the simplified notation 
\begin{align}
  H_{ij}(z):=H(z_i(\bfx),z_j(\bfx),\mathbf{n}_{ij}).
\end{align}
The correction factors $\alpha_{ij}^{(m)}$ can
be calculated as in Section~\ref{sec:limiting_last}. In the GMC
version of the flux limiting procedure, the
bounds $Q_i^{\pm}$ should be multiplied by $c_m$.

\subsection{Slope limiting for high-order reconstructions}

Stagewise limiting guarantees the BP property of the
cell averages $y_i^{(s)}$. If the calculation of
$H_{ij}\left(\hat{y}^{(s)}\right)$
 requires the BP property of
the  high-order reconstructions
$\hat y_i^{(s)}:K_i\to\R$, it can
be enforced using slope limiting to blend a
reconstructed polynomial
$\hat y_i^{(s)}$
and a BP cell average as follows:
\begin{itemize}
\item If an intermediate cell
average $y_i$ is BP w.r.t. $[u_i^{\min},u_i^{\max}]$,
then there exists  $\theta_i\in[0,1]$ such that
\beq\label{limrec}
\hat y_i^*(\mathbf{x}_p):=y_i+
\theta_i\left(\hat y_i(\mathbf{x}_p)
-y_i\right)\in[u_i^{\min},u_i^{\max}]
\eeq
at each quadrature point $\mathbf{x}_p\in\partial K_i$ at
which the value of $\hat y_i^*(\mathbf{x}_p)$ is
required for calculation of $H_{ij}$. Adapting
the Barth-Jespersen formula \cite{barthjesp} to this
setting, we define the correction factor
\begin{equation}\label{slopelim}
\theta_i=\min_{p}
\left\{\begin{array}{ll}
\min\left\{1,\frac{u^{\max}_i-y_i}{\hat y_i(\mathbf{x}_p)-y_i}\right\} &
\mbox{if}\ \ \hat y_i(\mathbf{x}_p)>u^{\max}_i,\\
1 & \mbox{if}\ \ \hat y_i(\mathbf{x}_p)\in[u^{\min}_i, u^{\max}_i],\\
\min\left\{1,\frac{u^{\min}_i-y_i}{\hat y_i(\mathbf{x}_p)-y_i}\right\} &
\mbox{if}\ \ \hat y_i(\mathbf{x}_p)<u^{\min}_i.
\end{array}\right.
\end{equation}

We remark that slope limiters of this
kind were used, e.g., by Zhang and Shu \cite{ZhangShu2010,zhang2011maximum}
in the context of positivity-preserving WENO-DG schemes combined with
SSP-RK time discretizations.

\item
If the intermediate cell averages are not BP because
the RK method does not satisfy \eqref{SSP-stages}
or the flux limiter for $F_{ij}^{(m)}$
is deactivated, limited reconstructions of the form
 \beq\hat y_i^*(\mathbf{x}_p):=u_i^n+
 \theta_i(\hat y_i(\mathbf{x}_p)-u_i^n)\in[u_i^{\min},u_i^{\max}]
 \eeq
 may be employed. The conservation properties of the limited scheme
 are not affected by using a convex combination of $\hat y_i$ and
 $u_i^n$ for calculation of the high-order LLF fluxes $H_{ij}(\hat y_i^*)$.
\end{itemize}

\section{Case study: flux-limited WENO-RK schemes}\label{sec:BPschemes}

As mentioned in the introduction, the presented flux limiters can be applied to different kinds of high-order discretizations in space and time. 
In the numerical experiments of Section \ref{sec:num}, we use a fifth-order WENO spatial discretization 
combined with explicit high-order Runge-Kutta methods. The
corresponding sequence of solution updates is defined by
the Butcher tableau\smallskip

\begin{align}\label{butcher_tableau}
\begin{tabular}{c|ccccc}
  0        &          &          &         &            & \\
  $c_2$    & $a_{21}$ &          &         &            & \\
  $c_3$    & $a_{31}$ & $a_{32}$ &         &            & \\
  $\vdots$ & $\vdots$ & $\ddots$ &         &            & \\
  $c_M$    & $a_{M1}$ & $a_{M2}$ & $\dots$ & $a_{MM-1}$ & \\ \hline
           & $b_1$    & $b_2$    & $\dots$ & $b_{MM-1}$ & $b_M$.
\end{tabular}
\end{align}
\smallskip

To illustrate the use of limiting techniques for antidiffusive fluxes
depending on time and/or space
discretizations, we now detail specific combinations of high-order
RK methods with GMC-type flux limiters. A numerical study of the
following RK methods is performed in Section \ref{sec:num}:
\begin{itemize}
  \item SSP54: a fourth-order strong stability preserving method. 
  \item ExE-RK5: a fifth-order extrapolated Euler Runge-Kutta method. 
  \item RK76: a sixth-order Runge-Kutta method. 
\end{itemize}
The details of these time integration methods can be found in \ref{appendix:RK_methods}. The algorithms presented in \ref{appendix:one-dim_GMC_ExRK} define WENO-GMC discretizations for uniform grids in 1D. Since SSP time discretizations preserve the BP properties of flux-corrected semi-discrete schemes, we equip the SSP54 method with the spatial GMC limiter derived in Section~\ref{subsec:gmc}. For extrapolation methods, the application of this limiter is sufficient to guarantee that intermediate solutions are BP but the final stage needs to be constrained using \eqref{fctgmc}. For general RK methods, the space-time limiting techniques of Sections \ref{sec:limiting_last} and~\ref{sec:stagewise} can be used to enforce the BP property. In this context, a flux limiter can be invoked in each stage or only in the final stage (allowing bound-violating approximations in the intermediate stages).
We therefore test the following limiting strategies for the RK methods
under investigation:
\begin{itemize}
\item SSP54-GMC: SSP54 with GMC space limiting for $F_{ij}^{(m)}$ in
  each intermediate stage.
\item ExE-RK5-GMC: ExE-RK5 with GMC space limiting for
  $F_{ij}^{(m)}$ in each intermediate stage and GMC space-time limiting
for $F_{ij}^{\rm RK}$ in the final stage.
\item RK76-GMC: RK76 with GMC space-time limiting for $F_{ij}^{\rm RK}$
only in the final stage.
\item Sw-RK76-GMC: RK76  with GMC space-time limiting for $F_{ij}^{(m)}$
  in each intermediate stage and for $F_{ij}^{\rm RK}$ in the final stage.
\end{itemize}
The  SSP54-GMC and RK76-GMC methods are tested in all numerical experiments of Section \ref{sec:num}. To demonstrate that stagewise BP limiting does not degrade the high-order accuracy of the baseline scheme, at least if the bounds are global, we include the results of ExE-RK5-GMC and Sw-RK76-GMC convergence studies for test problems with smooth exact solutions. In the rest of the numerical experiments, the stagewise BP schemes perform similarly to SSP54-GMC and RK76-GMC. Therefore, no further ExE-RK5-GMC and Sw-RK76-GMC results are  reported in Section~\ref{sec:num}. 

\subsection{Summary of the fully discrete BP schemes}
For the reader's convenience, we now summarize the main steps
of the fully discrete BP 
algorithms. 

\subsubsection{Implementation of SSP54-GMC}\label{sec:ssp54-gmc}
The formulas for the stage approximations
$y_i^{(1)}, \ldots, y_i^{(5)}$ are presented in \ref{sec:ssp54}. 
The final stage result is given by $u_i^{n+1}=y_i^{(5)}$. For $k=1,\ldots,4$,
we need to evaluate the time derivatives
\begin{align}\label{RHS_GMC}
  F_i(y^{(k)}):=\frac{1}{|K_i|}\sum_{\jinN}|S_{ij}|\lambda_{ij}\big(y^{(k)}\big)\big(\bar u_{ij}^*\big(y^{(k)}\big)-y_i^{(k)}\big),
\end{align}
where $\lambda_{ij}$ is an upper bound for the wave speed
of the Riemann problem associated with face $S_{ij}$. 
In Section \ref{sec:num}, we specify the values of $\lambda_{ij}$ that 
we use for the different numerical experiments. For $u\in
\{y_i^{(1)}, \ldots, y_i^{(5)}\}$, we calculate the 
bar states $\bar u_{ij}^*(u)$  as follows:
\begin{enumerate}[i.]
\item Compute the low-order bar states $\bar u_{ij}^L$ using  \eqref{bar_states}.
  \item Compute the antidiffusive fluxes $F_{ij}$ using \eqref{Fijdef}. 
  \item Compute the GMC bounds $Q_i^{\pm}$ defined by \eqref{bounds_glob}. 
  \item Compute the correction factors  $\alpha_{ij}$ using
  \eqref{p-def}-\eqref{alpha-def}.
  \item Compute the limited antidiffusive fluxes $F_{ij}^*=\alpha_{ij}F_{ij}$.
  \item Compute the flux-corrected bar states $\bar u_{ij}^*$ using \eqref{barstar}.
\end{enumerate}

\subsubsection{Implementation of ExE-RK5-GMC}\label{sec:ExE-RK5-GMC}
The ExE-RK5 method that we use is defined in \ref{appendix:ExE-RK5}. The
BP stage approximations $y_i^{(1)},\ldots, y_i^{(11)}$ are
calculated as in Section \ref{sec:ssp54-gmc}. In the final stage,
we update $u=u^n$ as follows: 
\begin{enumerate}[i.]
\item Compute the low-order bar states $\bar u_{ij}^L$ using \eqref{bar_states}.
\item Compute the high-order fluxes $H_{ij}^{\rm RK}$ using \eqref{rk-update}.
\item Compute the low-order fluxes $H_{ij}^{\rm FE}=H(u_i^n,u_j^n,\mathbf{n}_{ij})$.
\item Compute the antidiffusive fluxes $F_{ij}^{\rm RK}=
  |S_{ij}|(H_{ij}^{\rm FE}-H_{ij}^{\rm RK})$.
   \item Compute the low-order predictor $u_i^{\rm FE}$ using \eqref{bp-update}.
\item Compute the GMC bounds $Q_i^{\pm}$ defined by \eqref{bounds_glob}.
\item Compute the correction factors $\alpha_{ij}$ using 
 \eqref{p-def}-\eqref{alpha-def}.
\item Use $F_{ij}^*=\alpha_{ij}F_{ij}^{\rm RK}$ to
  compute $u_i^{n+1}$ defined by \eqref{fctgmc}.
\end{enumerate}

\subsubsection{Implementation of RK76-GMC}
The  Butcher tableau of the RK76 method is presented in \ref{sec:RK76}. The
stage approximations $y_i^{(1)},\ldots,y_i^{(7)}$ are calculated using
\eqref{stage-butcher} without applying any flux limiter. The final
RK update is constrained using the GMC space-time limiter, i.e., 
the algorithm presented in Section \ref{sec:ExE-RK5-GMC}.

\subsubsection{Implementation of Sw-RK76-GMC}
To constrain the intermediate Butcher stages of the RK76 method, we proceed as follows: 
\begin{enumerate}[i.]
\item Compute the low-order predictor $y_i^{{\rm FE},(m)}$
  using \eqref{stage_butcher_yL}.
\item Compute the antidiffusive flux 
  $F_{ij}^{(m)}$ using \eqref{RK-stage_flux}.
\item Compute the GMC bounds $Q_i^{\pm}$ defined by \eqref{bounds_glob}.
\item Compute the correction factors $\alpha_{ij}^{(m)}$ using 
 \eqref{p-def}-\eqref{alpha-def}.
  \item Use $F_{ij}^{*,(m)}=\alpha_{ij}^{(m)}F_{ij}^{(m)}$ to
    compute $y_i^{*,(m)}$ defined by \eqref{stage_butcher_sw}.
\end{enumerate}
To guarantee that the final RK update is BP, we again use the
algorithm presented in Section \ref{sec:ExE-RK5-GMC}.

\section{Numerical examples}\label{sec:num}
In this section, we perform a series of numerical experiments to study the properties of the methods selected in Section \ref{sec:BPschemes}. We begin with an accuracy test for WENO and WENO-GMC space discretizations. Then we test the flux-limited WENO-RK methods for time-dependent problems.

Unless mentioned otherwise, we use a fifth-order WENO \cite{Shu2009} spatial discretization on a uniform mesh of $N_h$ one-dimensional cells
$K_i=[x_{i-1/2},x_{i+1/2}]$ with 
$|K_i|=\Delta x$. For time-dependent problems, the default
choice of the time step is $\Delta t=0.4\Delta x/(1+\gamma)$,
where $\gamma\geq 0$ is the parameter of the 
limiting constraints \eqref{BPconstr2}. In all experiments,
we enforce the global bounds of the initial data $u(x,0)$, i.e., use
\begin{align*}
  u_i^{\min}&=u^{\min} := \min_x u(x,0), \\
  u_i^{\max}&=u^{\max} := \max_x u(x,0)
\end{align*}
for $i=1, \dots, N_h$. To quantify the magnitude of undershoots and overshoots
(if any), we report 
\beq
\delta = \min\{\delta^-, \delta^+\},
\eeq
where
\begin{align*}
  \delta^{-} = \min_t ~\min_{i=1, \dots, N_h} ~u_i(t) - u^{\min}, \qquad
  \delta^{+} = \min_t ~\min_{i=1, \dots, N_h} ~u^{\max}-u_i(t).
\end{align*}
Note that $\delta\geq 0$ for any BP numerical solution. In practice, the
low-order and flux-limited methods are positivity preserving to
machine precision. Therefore, a numerical value of $\delta$ may be
a small negative number. Since conservation laws are also satisfied
to machine precision, it is acceptable to simply clip the solution. 
If the exact solution is available, we calculate the discrete $L_1$ error 
\begin{align*}
  E_1(t) &= \Delta x \sum_{i=1}^{N_h}|\tilde{u}_i(t)-u^{\text{exact}}(x_i,t)|
\end{align*}
and the corresponding Experimental Order of Convergence (EOC) using
the following fifth-order polynomial reconstruction of the
numerical solution at the center $x_i$ of the cell:
\begin{align*}
  \tilde{u}_i
  = \frac{1}{1920} \left(9 u_{i-2}-116 u_{i-1} + 2134 u_i - 116 u_{i+1} + 9 u_{i+2}\right).
\end{align*}

\subsection{Convergence of a WENO-GMC semi-discretization}\label{sec:num_semi_discrete}
In this section, we test the convergence properties of the spatial semi-discretization 
using the GMC limiters from Section \ref{sec:gmcl}. 
To this end, we consider the one-dimensional conservation law
\beq\label{cons_law_semi}
\frac{\partial u}{\partial t} + \frac{\partial f(u)}{\partial x} = 0 \quad \mbox{ in }  \quad\Omega=(0,1),
\eeq
where $f(u)$ is the flux function. The time derivative of the exact cell average is given by 
\beq\label{WENO1Dex}
\td{u_i}{t}=-\frac{1}{\Delta x}\int_{x_{i-1/2}}^{x_{i+1/2}}\frac{\partial f(u)}{\partial x}
\mathrm{d}x
=-\frac{f(u(x_{i+1/2}))-f(u(x_{i-1/2}))}{\Delta x}.
\eeq
We discretize \eqref{cons_law_semi} in space using a fifth-order WENO scheme
which approximates \eqref{WENO1Dex} by
\beq\label{WENO1Dtest}
\td{u_i^\WENO}{t}=-\frac{H\left(\hat u_i^+,\hat u_{i+1}^-,1\right)
-H\left(\hat u_{i-1}^+,\hat u_i^-,1\right)}{\Delta x}.
\eeq
The Lax-Friedrichs flux
$H(\cdot,\cdot,\cdot)$ is defined using
$\lambda_{i+1/2} = \max\{u_i,u_{i+1},\hat u_i^+,\hat u_{i+1}^-\}
$ in this test.
The interface values $\hat u_i^-=\hat u_i(x_{i-1/2})$ and $\hat u_i^+=\hat u_i(x_{i+1/2})$ are determined by evaluating the WENO polynomial reconstruction $\hat u_i\in\mathbb{P}_5(K_i)$ at the cell interfaces $x_{i\pm1/2}$.

Applying the GMC flux limiter to \eqref{WENO1Dtest},  we obtain the WENO-GMC semi-discretization
\beq
\td{u_i^\GMC}{t}=(\lambda_{i+1/2}+\lambda_{i-1/2})\frac{\bar u_i^*-u_i}{\Delta x}.
\eeq

To assess its spatial accuracy, 
we consider $u(x)=\exp\left(-100(x-0.5)^2\right)$ and the nonlinear flux function $f(u)=u^2/2$. 
In Table \ref{table:semi}, we report the discrete $L^1$ errors
\begin{align*}
E_1^{\WENO}&=\Delta x\sum_{i=1}^{N_h}\left|\td{u_i}{t}-\td{u_i^{\WENO}}{t}\right|,\\
E_1^{\GMC}&=\Delta x\sum_{i=1}^{N_h}\left|\td{u_i}{t}-\td{u_i^{\GMC}}{t}\right|,
\end{align*}
as well as the EOCs of the WENO and WENO-GMC semi-discretizations. 
We consider multiple values of the GMC parameter $\gamma\geq 0$ and achieve optimal convergence rates with $\gamma\ge 0.5$.

\begin{table}[!ht]\small
  \begin{center}
    \begin{tabular}{||c||c|c||c|c||c|c||c|c||} \hline
      $N_h$ & $E_1^{\WENO}$ & EOC & 
      $E_1^{\GMC}$, $\gamma=0$ & EOC & $E_1^{\GMC}$, $\gamma=0.5$ & EOC  & $E_1^{\GMC}$, $\gamma=1$ & EOC 
      \\ \hline
      25   & 1.35e-03 &   -- & 1.35e-03 &   -- & 1.35e-03 &   -- & 1.35e-03 &   -- \\
      50   & 6.82e-05 & 4.30 & 5.12e-04 & 1.40 & 6.82e-05 & 4.30 & 6.82e-05 & 4.30 \\
      100  & 1.04e-06 & 6.04 & 6.60e-05 & 2.95 & 1.04e-06 & 6.04 & 1.04e-06 & 6.04 \\
      200  & 1.53e-08 & 6.08 & 8.30e-06 & 2.99 & 1.53e-08 & 6.08 & 1.53e-08 & 6.08 \\
      400  & 2.29e-10 & 6.06 & 1.04e-06 & 3.00 & 2.29e-10 & 6.06 & 2.29e-10 & 6.06 \\
      800  & 3.48e-12 & 6.04 & 1.30e-07 & 3.00 & 3.48e-12 & 6.04 & 3.48e-12 & 6.04 \\
      1600 & 5.36e-14 & 6.02 & 1.63e-08 & 3.00 & 5.36e-14 & 6.02 & 5.36e-14 & 6.02 \\ \hline
    \end{tabular}
    \caption{Grid convergence study for the 5th-order WENO and
      WENO-GMC discretizations of $\pd{}{x}(u^2/2)$. \label{table:semi}}
  \end{center}
\end{table}

\subsection{Linear advection}\label{sec:num_linear_advection}
The first test problem for our numerical study of the flux-limited space-time discretizations defined in Section \ref{sec:BPschemes} is 
the one-dimensional linear advection equation 
\beq\label{lin_adv}
\pd{u}{t}+a\pd{u}{x}=0 \quad\mbox{ in }\quad\Omega=(0,1) 
\eeq
with constant velocity $a=1$. 
The initial condition is given by the smooth function 
\begin{align}\label{lin_adv_smooth}
  u(x,0)=\exp(-100(x-0.5)^2).
\end{align}
We solve \eqref{lin_adv} up to the final time $t=1$ using
 $\lambda_{i+1/2}=1$ for all $i$.
The results of a grid convergence study for SSP54-GMC,
ExE-RK5-GMC, RK76-GMC, Sw-RK76-GMC, and the underlying
WENO-RK discretizations
are shown in Table \ref{table:lin_adv_conv}. The negative
values of $\delta$ indicate that the high-order
baseline schemes may, indeed,
produce undershoots or overshoots on coarse meshes.

In this test, all GMC-constrained 
 schemes deliver optimal convergence
 rates for $\gamma=1$. The only scheme that preserves the
 full accuracy for $\gamma=0$ is RK76-GMC, the method
 which performs flux limiting just once per time step. We
 remark that even this least dissipative method requires
 the use of $\gamma>0$ to achieve optimal EOCs for other
 test problems that we consider below.
 
\begin{table}[!ht]\scriptsize
  \begin{center}
    \subfloat[SSP54 and SSP54-GMC]{
      \begin{tabular}{||c||c|c|c||c|c|c||c|c|c||} \hline
        & \multicolumn{3}{c||}{baseline}  
        &\multicolumn{3}{c||}{GMC, $\gamma=0$} 
        &\multicolumn{3}{c||}{GMC, $\gamma=1$} \\ \cline{2-10}
        $N_h$ & $E_1$ & EOC & $\delta$ & $E_1$ & EOC & $\delta$ & $E_1$ & EOC & $\delta$ \\ \hline
        25   & 2.43e-02 &  --  & -2.00e-05 & 2.43e-02 &  --  & 1.28e-10 & 2.43e-02 &  --  & 7.58e-11 \\
        50   & 2.30e-03 & 3.40 & -3.26e-08 & 2.41e-03 & 3.34 & 2.03e-11 & 2.29e-03 & 3.40 & 4.95e-12 \\
        100  & 1.22e-04 & 4.24 & -6.45e-11 & 1.37e-04 & 4.13 & 5.64e-12 & 1.22e-04 & 4.23 & 1.07e-12 \\
        200  & 4.22e-06 & 4.85 &  1.65e-11 & 1.35e-05 & 3.34 & 1.65e-11 & 4.22e-06 & 4.85 & 1.65e-11 \\
        400  & 1.35e-07 & 4.97 &  1.51e-11 & 1.89e-06 & 2.84 & 1.51e-11 & 1.35e-07 & 4.97 & 1.51e-11 \\
        800  & 4.24e-09 & 4.99 &  1.45e-11 & 2.89e-07 & 2.71 & 1.45e-11 & 4.24e-09 & 4.99 & 1.45e-11 \\
        1600 & 2.17e-10 & 4.29 &  1.42e-11 & 4.48e-08 & 2.69 & 1.42e-11 & 2.15e-10 & 4.30 & 1.42e-11 \\ \hline
      \end{tabular}
    }

    \subfloat[ExE-RK5 and ExE-RK5-GMC]{
      \begin{tabular}{||c||c|c|c||c|c|c||c|c|c||} \hline
        & \multicolumn{3}{c||}{baseline}  
        &\multicolumn{3}{c||}{GMC, $\gamma=0$} 
        &\multicolumn{3}{c||}{GMC, $\gamma=1$} \\ \cline{2-10}
        $N_h$ & $E_1$ & EOC & $\delta$ & $E_1$ & EOC & $\delta$ & $E_1$ & EOC & $\delta$ \\ \hline
        25   & 2.43e-02 &  --  & -2.00e-05 & 2.43e-02 &   -- & 1.23e-10 & 2.43e-02 &   -- & 1.51e-11 \\
        50   & 2.29e-03 & 3.40 & -3.26e-08 & 2.37e-03 & 3.35 & 1.95e-11 & 2.29e-03 & 3.40 & 4.91e-12 \\
        100  & 1.22e-04 & 4.23 & -6.47e-11 & 1.33e-04 & 4.16 & 5.51e-12 & 1.22e-04 & 4.23 & 6.82e-13 \\
        200  & 4.22e-06 & 4.85 &  1.65e-11 & 1.05e-05 & 3.66 & 1.65e-11 & 4.22e-06 & 4.85 & 1.65e-11 \\
        400  & 1.35e-07 & 4.97 &  1.51e-11 & 1.50e-06 & 2.80 & 1.51e-11 & 1.35e-07 & 4.97 & 1.51e-11 \\
        800  & 4.23e-09 & 4.99 &  1.45e-11 & 2.41e-07 & 2.64 & 1.45e-11 & 4.24e-09 & 4.99 & 1.45e-11 \\
        1600 & 1.33e-10 & 5.00 &  1.42e-11 & 3.83e-08 & 2.66 & 1.42e-11 & 1.33e-10 & 5.00 & 1.42e-11 \\ \hline
      \end{tabular}
    }

    \subfloat[RK76 and RK76-GMC]{
      \begin{tabular}{||c||c|c|c||c|c|c||c|c|c||} \hline
        & \multicolumn{3}{c||}{baseline}  
        &\multicolumn{3}{c||}{GMC, $\gamma=0$} 
        &\multicolumn{3}{c||}{GMC, $\gamma=1$} \\ \cline{2-10}
        $N_h$ & $E_1$ & EOC & $\delta$ & $E_1$ & EOC & $\delta$ & $E_1$ & EOC & $\delta$ \\ \hline
        25   & 2.43e-02 &   -- & -2.00e-05 & 2.43e-02 &  --  & 3.37e-11 & 2.43e-02 &   -- & 6.73e-12 \\
        50   & 2.29e-03 & 3.40 & -3.26e-08 & 2.29e-03 & 3.40 & 4.73e-12 & 2.29e-03 & 3.40 & 4.04e-13 \\
        100  & 1.22e-04 & 4.23 & -6.48e-11 & 1.22e-04 & 4.23 & 7.03e-13 & 1.22e-04 & 4.23 & 1.00e-13 \\
        200  & 4.22e-06 & 4.85 &  1.65e-11 & 4.22e-06 & 4.85 & 1.65e-11 & 4.22e-06 & 4.85 & 1.65e-11 \\
        400  & 1.35e-07 & 4.97 &  1.51e-11 & 1.35e-07 & 4.97 & 1.51e-11 & 1.35e-07 & 4.97 & 1.51e-11 \\
        800  & 4.23e-09 & 4.99 &  1.45e-11 & 4.23e-09 & 4.99 & 1.45e-11 & 4.24e-09 & 4.99 & 1.45e-11 \\
        1600 & 1.32e-10 & 5.00 &  1.42e-11 & 1.32e-10 & 5.00 & 1.42e-11 & 1.33e-10 & 5.00 & 1.42e-11 \\ \hline
      \end{tabular}
    }

    \subfloat[Sw-RK76-GMC]{
      \begin{tabular}{||c||c|c|c||c|c|c||} \hline
        &\multicolumn{3}{c||}{GMC, $\gamma=0$} 
        &\multicolumn{3}{c||}{GMC, $\gamma=1$} \\ \cline{2-7}
        $N_h$ & $E_1$ & EOC & $\delta$ & $E_1$ & EOC & $\delta$ \\ \hline
        25   & 2.43e-02 &  --  & 3.37e-11 & 2.43e-02 &  --  & 6.73e-12 \\
        50   & 2.30e-03 & 3.40 & 4.79e-12 & 2.29e-03 & 3.40 & 4.28e-13 \\
        100  & 1.22e-04 & 4.24 & 6.25e-13 & 1.22e-04 & 4.23 & 1.24e-13 \\
        200  & 5.40e-06 & 4.50 & 1.65e-11 & 4.22e-06 & 4.85 & 1.65e-11 \\
        400  & 5.86e-07 & 3.20 & 1.51e-11 & 1.35e-07 & 4.97 & 1.51e-11 \\
        800  & 8.37e-08 & 2.81 & 1.45e-11 & 4.24e-09 & 4.99 & 1.45e-11 \\
        1600 & 1.29e-08 & 2.70 & 1.42e-11 & 1.33e-10 & 5.00 & 1.42e-11 \\ \hline
      \end{tabular}
    }
    \caption{Grid convergence study for the linear advection problem \eqref{lin_adv} 
      with smooth initial data \eqref{lin_adv_smooth}.\label{table:lin_adv_conv}}
  \end{center}
\end{table}

To check how well a given scheme can preserve smooth peaks and capture 
discontinuities, we run the linear advection test with the initial data \cite{Guermond2011}
\beq\label{lin_adv_nsmooth}
u(x,0) = 
\begin{cases}
  \exp{(-300(2x-0.3)^2)} & \mbox{ if } |2x-0.3|\leq 0.25, \\
  1                  & \mbox{ if } |2x-0.9|\leq 0.2, \\
  \sqrt{1-\left(\frac{2x-1.6}{0.2}\right)^2} & \mbox{ if } |2x-1.6|\leq 0.2, \\
  0 & \mbox{\text{otherwise}}.
\end{cases}
\eeq
Figure \ref{fig:lin_adv_nsmooth} shows the results produced by SSP54-GMC,
RK76-GMC, and the corresponding baseline schemes at $t=1$ and $t=100$. Both
GMC schemes use $\gamma=1$ and preserve the global bounds without changing
the high-order WENO-RK approximation
in smooth regions. The deactivation of GMC
limiters leads to visible violations of the bounds in proximity to
steep gradients. The values of $\delta$ listed above the diagrams
quantify the amount of undershooting/overshooting for each scheme. 

\begin{figure}[!h]
  \begin{center}
  {\scriptsize
    \subfloat[SSP54 and SSP54-GMC at $t=1$]{    
      \begin{tabular}{c}
        baseline: $\delta=-4.97\times 10^{-6}$ \\
        GMC: $\delta=-1.11\times 10^{-15}$ \\
        \includegraphics[scale=0.4]{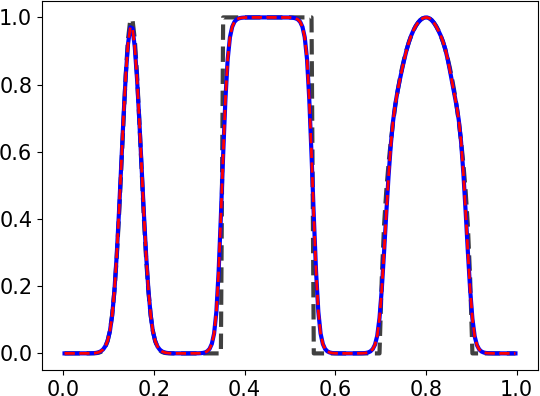}
      \end{tabular}
    }
    \quad
    \subfloat[SSP54 and SSP54-GMC at $t=100$]{    
      \begin{tabular}{c}
        baseline: $\delta=-1.32\times 10^{-2}$ \\
        GMC: $\delta=-4.44\times 10^{-16}$ \\
        \includegraphics[scale=0.4]{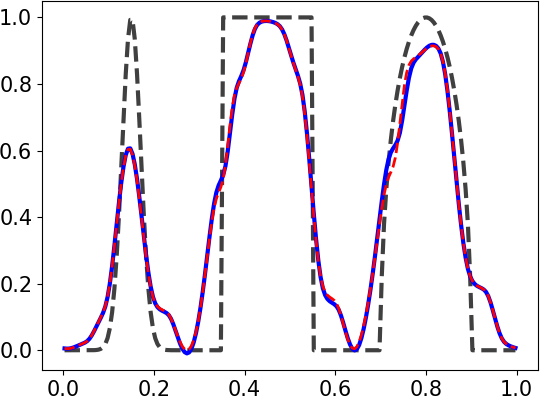}
      \end{tabular}
    }

    \vspace{5pt}
    \subfloat[RK76 and RK76-GMC at $t=1$]{    
      \begin{tabular}{c}
        baseline: $\delta=-4.97\times 10^{-6}$ \\
        GMC: $\delta=-1.11\times 10^{-15}$ \\
        \includegraphics[scale=0.4]{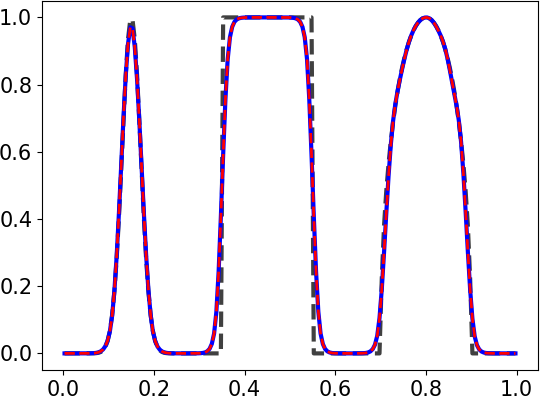}
      \end{tabular}
    }
    \quad
    \subfloat[RK76 and RK76-GMC at $t=100$]{    
      \begin{tabular}{c}
        baseline: $\delta=-1.32\times 10^{-2}$ \\
        GMC: $\delta=-1.11\times 10^{-15}$ \\
        \includegraphics[scale=0.4]{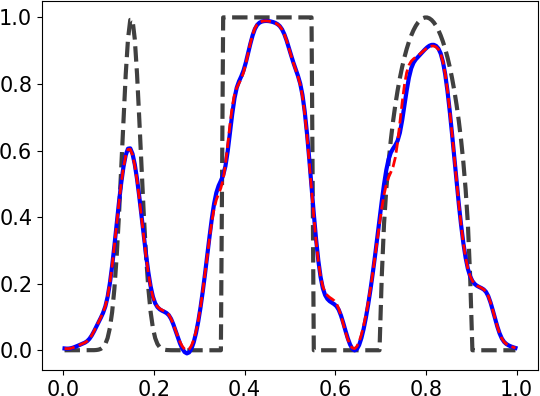}
      \end{tabular}
    }
  }
  \end{center}
  \caption{Numerical solutions to the linear advection problem
    \eqref{lin_adv} 
    with initial data \eqref{lin_adv_nsmooth}. Computations are
    performed using $N_h=200$ cells.
    The SSP54-GMC and RK76-GMC results are
    shown as dashed red lines. The
    results obtained with the baseline SSP54 and RK76
    schemes   are shown as solid blue lines. 
    \label{fig:lin_adv_nsmooth}}
\end{figure}

\subsection{Burgers equation}
To study the numerical behavior of the methods under investigation
in the context of nonlinear hyperbolic problems, we consider
the one-dimensional inviscid Burgers equation 
\beq\label{1Dburgers}
\pd{u}{t}+\pd{}{x}\left(\frac{u^2}{2}\right)=0 \quad\mbox{ in }\quad\Omega=(0,2\pi).
\eeq
Following Kurganov and Tadmor \cite{kurganov}, we 
use the smooth initial condition 
\beq\label{1Dburgers_init}
u(x,0) = 0.5 +\sin x. 
\eeq
The entropy solution of the initial value problem develops
a shock at the critical time $T_c=1$.  For $t<T_c$, the smooth
exact solution is defined by the nonlinear equation
$u(x,t) = 0.5+\sin(x-u(x,t)t)$, which can be derived
using the method of characteristics.

For this problem, we define the Lax-Friedrichs fluxes using
$
\lambda_{i+1/2} = \max\{u_i,u_{i+1},\hat u_i^+,\hat u_{i+1}^-\}
$.
The results of a grid convergence study 
are summarized in Table \ref{table:burgers_conv}. 
The errors and convergence rates correspond to the pre-shock time $T=0.5$. 
None of the flux-limited schemes converges optimally for $\gamma=0$. 
Using $\gamma=1$, we recover full accuracy with SSP54-GMC, RK76-GMC,
and Sw-RK76-GMC. The ExE-RK5-GMC version delivers optimal convergence
rates for $\gamma\ge 2$. 

To study the ability of our schemes to capture the shock that forms
at $t=T_c$, we ran simulations up to the post-shock time $T=2$. As
shown in Fig. \ref{fig:burgers}, the SSP54-GMC and RK76-GMC
results coincide with the globally BP solutions produced by
the corresponding baseline schemes.

\begin{table}[!ht]\scriptsize
  \begin{center}
    \subfloat[SSP54 and SSP54-GMC]{
      \begin{tabular}{||c||c|c|c||c|c|c||c|c|c||} \hline
        & \multicolumn{3}{c||}{baseline}  
        &\multicolumn{3}{c||}{GMC, $\gamma=0$} 
        &\multicolumn{3}{c||}{GMC, $\gamma=1$} \\ \cline{2-10}
        $N_h$ & $E_1$ & EOC & $\delta$ & $E_1$ & EOC & $\delta$ & $E_1$ & EOC & $\delta$ \\ \hline
25  & 2.01e-03 &   -- & 2.72e-03 & 5.90e-03 & 0.00 & 3.17e-03 & 2.08e-03 & 0.00 & 2.70e-03 \\
50  & 1.12e-04 & 4.16 & 6.62e-04 & 7.51e-04 & 2.97 & 9.67e-04 & 1.16e-04 & 4.17 & 6.62e-04 \\
100 & 4.70e-06 & 4.58 & 1.84e-04 & 1.13e-04 & 2.73 & 2.75e-04 & 4.81e-06 & 4.59 & 1.64e-04 \\
200 & 2.12e-07 & 4.47 & 4.60e-05 & 1.62e-05 & 2.80 & 6.89e-05 & 2.16e-07 & 4.48 & 4.11e-05 \\
400 & 1.05e-08 & 4.34 & 1.15e-05 & 2.40e-06 & 2.76 & 1.72e-05 & 1.07e-08 & 4.34 & 1.03e-05 \\
800 & 6.29e-10 & 4.06 & 2.58e-06 & 3.68e-07 & 2.70 & 4.31e-06 & 6.16e-10 & 4.11 & 2.57e-06 \\ \hline
      \end{tabular}
    }

    \subfloat[ExE-RK5 and ExE-RK5-GMC]{
      \begin{tabular}{||c||c|c|c||c|c|c||c|c|c||c|c|c||} \hline
        & \multicolumn{3}{c||}{baseline}  
        &\multicolumn{3}{c||}{GMC, $\gamma=0$} 
        &\multicolumn{3}{c||}{GMC, $\gamma=1$} 
        &\multicolumn{3}{c||}{GMC, $\gamma=2$} \\ \cline{2-13}
        $N_h$ 
        & $E_1$ & EOC & $\delta$ 
        & $E_1$ & EOC & $\delta$ 
        & $E_1$ & EOC & $\delta$ 
        & $E_1$ & EOC & $\delta$ \\ \hline
25  & 2.04e-03 &   -- & 2.72e-03 & 6.66e-03 &   -- & 6.76e-04 & 2.08e-03 &   -- & 2.70e-03 & 2.10e-03 &   -- & 2.65e-03 \\
50  & 1.14e-04 & 4.16 & 6.62e-04 & 5.69e-04 & 3.55 & 9.68e-04 & 1.14e-04 & 4.20 & 6.62e-04 & 1.16e-04 & 4.17 & 6.59e-04 \\
100 & 4.79e-06 & 4.57 & 1.84e-04 & 1.22e-04 & 2.22 & 6.73e-05 & 4.26e-06 & 4.74 & 1.64e-04 & 4.82e-06 & 4.59 & 1.67e-04 \\
200 & 2.16e-07 & 4.47 & 4.60e-05 & 1.81e-05 & 2.75 & 1.51e-05 & 3.42e-07 & 3.64 & 4.01e-05 & 2.16e-07 & 4.48 & 4.17e-05 \\
400 & 1.06e-08 & 4.34 & 1.15e-05 & 2.57e-06 & 2.82 & 3.67e-06 & 3.47e-08 & 3.30 & 1.00e-05 & 1.06e-08 & 4.35 & 1.04e-05 \\
800 & 5.62e-10 & 4.24 & 2.58e-06 & 3.63e-07 & 2.83 & 9.11e-07 & 5.33e-09 & 2.70 & 2.43e-06 & 5.62e-10 & 4.24 & 2.58e-06 \\ \hline
      \end{tabular}
    }

    \subfloat[RK76 and RK76-GMC]{
      \begin{tabular}{||c||c|c|c||c|c|c||c|c|c||} \hline
        & \multicolumn{3}{c||}{baseline}  
        &\multicolumn{3}{c||}{GMC, $\gamma=0$} 
        &\multicolumn{3}{c||}{GMC, $\gamma=1$} \\ \cline{2-10}
        $N_h$ & $E_1$ & EOC & $\delta$ & $E_1$ & EOC & $\delta$ & $E_1$ & EOC & $\delta$ \\ \hline
25  & 2.04e-03 &   -- & 2.72e-03 & 2.63e-03 &   -- & 2.77e-03 & 2.08e-03 &   -- & 2.70e-03 \\
50  & 1.14e-04 & 4.16 & 6.62e-04 & 2.05e-04 & 3.68 & 6.69e-04 & 1.16e-04 & 4.17 & 6.62e-04 \\
100 & 4.79e-06 & 4.57 & 1.84e-04 & 1.95e-05 & 3.40 & 1.84e-04 & 4.82e-06 & 4.59 & 1.64e-04 \\
200 & 2.16e-07 & 4.47 & 4.60e-05 & 2.48e-06 & 2.98 & 4.60e-05 & 2.16e-07 & 4.48 & 4.11e-05 \\
400 & 1.06e-08 & 4.34 & 1.15e-05 & 3.66e-07 & 2.76 & 1.15e-05 & 1.06e-08 & 4.35 & 1.03e-05 \\
800 & 5.62e-10 & 4.24 & 2.58e-06 & 5.61e-08 & 2.71 & 2.58e-06 & 5.62e-10 & 4.24 & 2.57e-06 \\ \hline
      \end{tabular}
    }

    \subfloat[Sw-RK76-GMC]{
      \begin{tabular}{||c||c|c|c||c|c|c||} \hline
        &\multicolumn{3}{c||}{GMC, $\gamma=0$} 
        &\multicolumn{3}{c||}{GMC, $\gamma=1$} \\ \cline{2-7}
        $N_h$ & $E_1$ & EOC & $\delta$ & $E_1$ & EOC & $\delta$ \\ \hline
25  & 2.64e-03 &   -- & 2.68e-03 & 2.08e-03 &   -- & 2.70e-03 \\
50  & 2.39e-04 & 3.47 & 6.43e-04 & 1.16e-04 & 4.17 & 6.62e-04 \\ 
100 & 2.58e-05 & 3.21 & 1.75e-04 & 4.82e-06 & 4.59 & 1.64e-04 \\
200 & 3.81e-06 & 2.76 & 4.37e-05 & 2.16e-07 & 4.48 & 4.11e-05 \\
400 & 5.91e-07 & 2.69 & 1.09e-05 & 1.06e-08 & 4.35 & 1.03e-05 \\
800 & 8.91e-08 & 2.73 & 2.73e-06 & 5.62e-10 & 4.24 & 2.57e-06 \\ \hline
      \end{tabular}
    }
    \caption{Grid convergence study for the Burgers equation \eqref{1Dburgers} with smooth initial data 
      \eqref{1Dburgers_init}.\label{table:burgers_conv}}
  \end{center}
\end{table}

\begin{figure}[!h]
  \begin{center}
    {\scriptsize
      \subfloat[SSP54 and SSP54-GMC]{    
        \begin{tabular}{c}
          baseline: $\delta=1.64\times 10^{-4}$ \\
          GMC: $\delta=1.64\times 10^{-4}$ \\
          \includegraphics[scale=0.4]{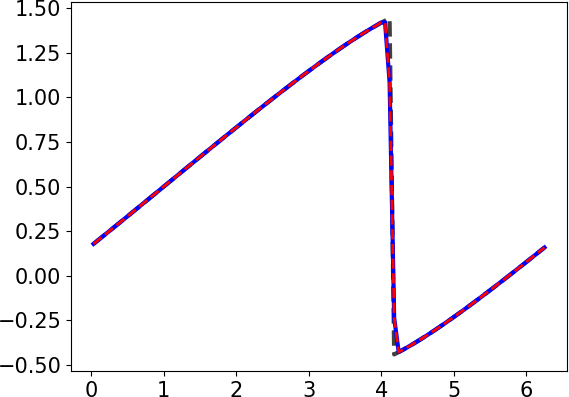}
        \end{tabular}
      }
      \quad
      \subfloat[RK76 and RK76-GMC]{
        \begin{tabular}{c}
          baseline: $\delta=1.64\times 10^{-4}$ \\
          GMC: $\delta=1.64\times 10^{-4}$ \\
          \includegraphics[scale=0.4]{figures/nonlin_ssp_gmc.png}
        \end{tabular}
      }
    }
  \end{center}
  \caption{Nonlinear Burgers problem \eqref{1Dburgers} 
    with smooth initial data \eqref{1Dburgers_init}.
    The dashed gray lines depict the exact solution at $t=2$. The  
    WENO-RK solutions calculated on a uniform mesh of
    $N_h=100$ cells without and with GMC limiting are shown
    as dashed red and solid blue lines, respectively. 
    \label{fig:burgers}}
\end{figure}

\subsection{One-dimensional KPP problem}
In the last test, we solve the one-dimensional KPP problem \cite{kpp}.
It equips the conservation law 
\begin{subequations}\label{kpp}
\beq
\pd{u}{t}+\pd{f(u)}{x}=0 \quad \mbox{ in } \quad \Omega=(0,1)
\eeq
with the nonlinear and nonconvex flux function
\beq
f(u)=
\begin{cases}
  \frac{1}{4}u(1-u) &\mbox{ if } u< \frac{1}{2}, \\
  \frac{1}{2}u(u-1)+\frac{3}{16} &\mbox{ if } \frac{1}{2}\leq u.
\end{cases}
\eeq
\end{subequations}
The initial condition is given by \cite{ern}
\beq\label{kpp_init}
u(x,0)=
\begin{cases}
  0 &\mbox{ if } x\in[0,0.35], \\
  1 &\mbox{ if } x\in(0.35,1].
\end{cases}
\eeq
We define the Lax-Friedrichs fluxes using $\lambda_{i+1/2}=1$ for all $i$.
As remarked in \cite{kpp}, many second- and higher-order schemes produce solutions that 
do not converge to the entropy solution.
To show this, we test a
fifth-order polynomial reconstruction which yields the interface values 
\begin{subequations}\label{non-WENO}
\begin{align}
  \hat u_{i}^+ &= \frac{1}{60}(-3u_{i-2} + 27u_{i-1} + 47u_i - 13u_{i+1} + 2u_{i+2}), \\
  \hat u_{i+1}^- &= \frac{1}{60}(2u_{i-2} - 13u_{i-1} + 47u_i + 27u_{i+1} - 3u_{i+2}).
\end{align}
\end{subequations}
The corresponding
 non-WENO semi-discretization is combined with the RK76 time integrator. The numerical solutions obtained with the resulting method on different mesh refinement levels are shown in Fig.~\ref{fig:kpp_non_WENO}.  Additionally, we perform a grid convergence study and summarize the results in Table \ref{table:kpp_non_WENO}. It can be seen that the non-WENO method based on \eqref{non-WENO} fails to converge to the entropy solution. None of the limiters presented in this work can fix this problem as long as the bounds are global and the baseline scheme is highly oscillatory. However, the combination of the fifth-order WENO space discretization with the RK76 time discretization does converge to the entropy solution even if no limiting is performed; see Fig.~\ref{fig:kpp_WENO_ExRK} and Table \ref{table:kpp_WENO_ExRK}. The application of GMC flux limiters removes the small overshoots and undershoots generated by the baseline schemes. The bound-preserving SSP54-GMC and RK76-GMC solutions are shown in Figs \ref{fig:kpp_SSP54_GMC} and \ref{fig:kpp_GMC_ExRK}, respectively. Tables~\ref{table:kpp_SSP54_GMC} and \ref{table:kpp_GMC_ExRK} summarize the results of our grid convergence studies for SSP54-GMC and RK76-GMC, respectively. 

\begin{figure}[!h]
  \centering
  \subfloat[RK76 (non-WENO) \label{fig:kpp_non_WENO}]{    
    \includegraphics[scale=0.5]{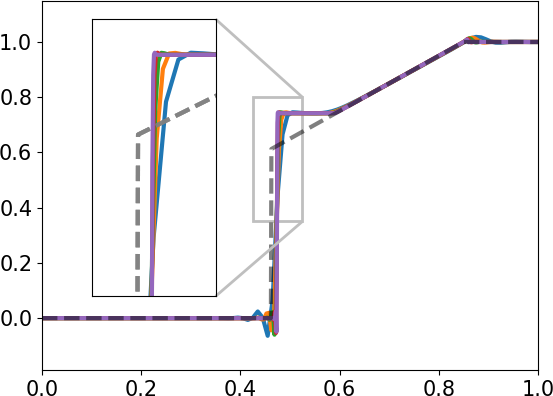}
  }
  \qquad
  \subfloat[RK76 (WENO) \label{fig:kpp_WENO_ExRK}]{    
    \includegraphics[scale=0.5]{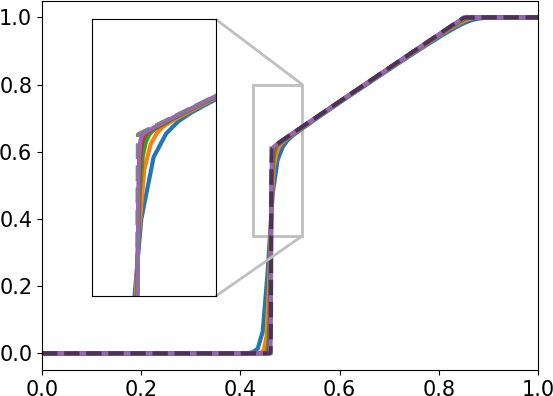}
  }

  \vspace{10pt}
  \subfloat[SSP54-GMC \label{fig:kpp_SSP54_GMC}]{    
    \includegraphics[scale=0.5]{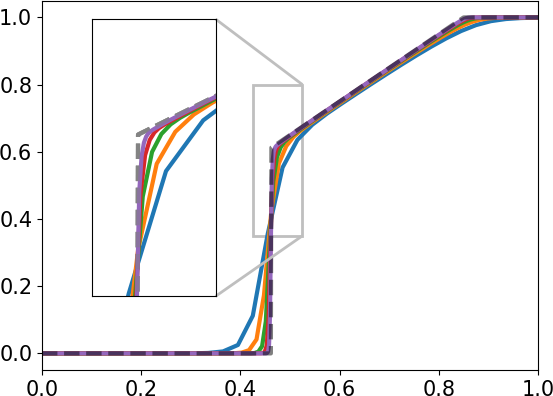}
  }
  \qquad
  \subfloat[RK76-GMC \label{fig:kpp_GMC_ExRK}]{    
    \includegraphics[scale=0.5]{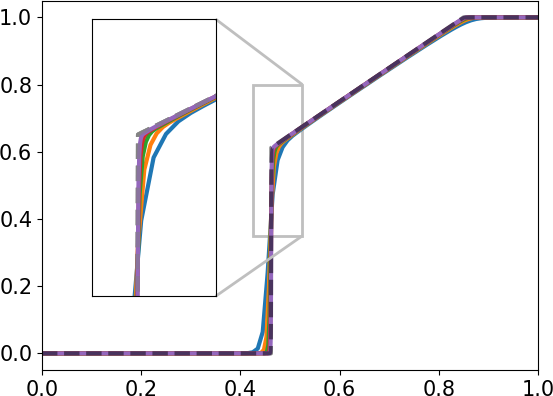}
  }
  \caption{Nonlinear problem \eqref{kpp} 
    with non-smooth initial data \eqref{kpp_init}.
    The exact solution at $t=1$ is shown as a dashed
    gray line. The remaining curves are numerical solutions
    obtained on different mesh refinement levels.
    \label{fig:kpp}}
\end{figure}

\begin{table}[!h]\scriptsize
  \begin{center}
    \subfloat[RK76 (non-WENO)\label{table:kpp_non_WENO}]{
      \begin{tabular}{||c|c|c|c||} \hline
        $N_h$ & $E_1$ & EOC & $\delta$  \\ \hline
        100  & 2.22e-02 &   --  & -1.33e-01 \\
        200  & 2.04e-02 & 0.118 & -1.34e-01 \\
        400  & 1.48e-02 & 0.463 & -1.34e-01 \\
        800  & 1.44e-02 & 0.041 & -1.34e-01 \\
        1600 & 1.40e-02 & 0.040 & -1.34e-01 \\ \hline
      \end{tabular}
    }
    \quad
    \subfloat[RK76 (WENO) \label{table:kpp_WENO_ExRK}]{
      \begin{tabular}{||c|c|c|c||} \hline
        $N_h$ & $E_1$ & EOC & $\delta$ \\ \hline
        100 & 2.84e-02 &   -- & -3.57e-09 \\
        200 & 1.28e-02 & 1.15 & -4.30e-09 \\
        400 & 7.29e-03 & 0.81 & -3.62e-09 \\
        800 & 3.80e-03 & 0.93 & -4.89e-09 \\
        1600& 1.98e-03 & 0.94 & -5.34e-09 \\ \hline
      \end{tabular}
    }

    \subfloat[SSP54-GMC \label{table:kpp_SSP54_GMC}]{
      \begin{tabular}{||c|c|c|c||} \hline
        $N_h$ & $E_1$ & EOC & $\delta$ \\ \hline
        100 & 2.84e-02 &   -- & -1.11e-15 \\
        200 & 1.28e-02 & 1.15 & -1.09e-14 \\
        400 & 7.29e-03 & 0.81 & -1.13e-14 \\
        800 & 3.80e-03 & 0.93 & -5.62e-14 \\
        1600 & 1.98e-03 & 0.94 & -5.80e-14 \\ \hline
      \end{tabular}
    }
    \quad
    \subfloat[RK76-GMC\label{table:kpp_GMC_ExRK}]{
      \begin{tabular}{||c|c|c|c|c||} \hline
        $N_h$ & $E_1$ & EOC & $\delta$ \\ \hline
        100  &  2.84e-02 &   -- & -1.11e-15 \\
        200  &  1.28e-02 & 1.15 & -1.18e-14 \\
        400  &  7.29e-03 & 0.81 & -1.07e-14 \\
        800  &  3.80e-03 & 0.93 & -5.31e-14 \\
        1600 &  1.98e-03 & 0.94 & -5.31e-14 \\ \hline
      \end{tabular}
    }
    \caption{Grid convergence study for the nonlinear problem \eqref{kpp} with non-smooth 
      data \eqref{kpp_init}. \label{table:kpp}}
  \end{center}
\end{table}


\section{Conclusions}\label{sec:conclusions}

The convex limiting approaches explored in this work are applicable
to a wide range of space discretizations combined with high-order
Runge-Kutta time-stepping methods. Although only explicit finite
volume schemes were considered in our numerical study, the same
flux correction tools can be used to constrain high-order finite
element discretizations and implicit RK methods. As shown in 
\cite{entropyDG,entropyCG,entropyHO},  convex
limiting in space makes it possible to enforce semi-discrete
entropy stability conditions in addition to maximum principles. 
Moreover, the new GMC limiter and its localized prototype
proposed in \cite{convex} belong to the family of monolithic
algebraic flux correction (AFC) schemes which lead to well-posed
nonlinear problems and are backed by theoretical analysis \cite{CL-diss}.

\section*{Acknowledgements}
The work of Dmitri Kuzmin and Johanna Gr\"ull was supported by the German Research Association (DFG) 
under grant KU 1530/23-1.
The work of Manuel Quezada de Luna and David I. Ketcheson was 
funded by King Abdullah University of Science and Technology (KAUST) in Thuwal, Saudi Arabia.

\newpage

\appendix
\section{High-order baseline RK methods}\label{appendix:RK_methods}

In this appendix, we provide details of the three high-order explicit 
baseline RK methods for solving
\beq
\td{u_i}{t} = F_i(\hat u):=-\frac{1}{|K_i|}\sum_{\jinN}|S_{ij}|H_{ij}(\hat u).
\eeq

\subsection{Fourth-order strong stability preserving (SSP54) RK method}\label{sec:ssp54}
The SSP-RK time integrator that we consider in this work is the 4th-order method
proposed in \cite{Kraaijevanger} and
\cite{Spiteri2002}. The Butcher form of its intermediate stages is as follows:
\begin{align*}
  y^{(1)} &= u^n + 0.391752226571890\Delta t F(u^n), \\
  y^{(2)} &= 0.444370493651235u^n + 0.555629506348765 y^{(1)}+0.368410593050371\Delta t F(y^{(1)}), \\
  y^{(3)} &= 0.620101851488403 u^n + 0.379898148511597 y^{(2)} + 0.251891774271694\Delta tF(y^{(2)}), \\
  y^{(4)} &= 0.178079954393132 u^n + 0.821920045606868 y^{(3)} + 0.544974750228521 \Delta t F(y^{(3)}), \\
  y^{(5)} &= 0.517231671970585 y^{(2)} 
  + 0.096059710526147 y^{(3)} + 0.063692468666290 \Delta t F(y^{(3)}) \\
  &+ 0.386708617503269 y^{(4)} + 0.226007483236906 \Delta t F(y^{(4)}). \nonumber
\end{align*} 
Note that each stage is a convex combination of Euler steps. Therefore, 
if $F(\cdot)$ is a BP spatial discretization and $u^n$ is BP, 
then each stage is BP under appropriate time step restrictions. The RK update is given by 
$u^{n+1} = y^{(5)}$. Hence, if the stages are BP, $u^{n+1}$ is BP and no extra limiting is needed. 

\subsection{Fifth-order extrapolated Euler (ExE-RK5) RK method}\label{appendix:ExE-RK5}
The Butcher tableau of the 5th-order extrapolated Euler RK method is given by 
{\small
\begin{align}\label{EeRK5}
\begin{tabular}{c|ccccccccccc}
  0 & 0 & & & & & & & & & &  \\
  1/2 & 1/2 & 0 & & & & & & & & &  \\
  1/3 & 1/3 & 0 & 0 & & & & & & & &  \\
  2/3 & 1/3 & 0 & 1/3 & 0 & & & & & & &  \\
  1/4 & 1/4 & 0 & 0 & 0 & 0 & & & & & &  \\
  1/2 & 1/4 & 0 & 0 & 0 & 1/4 & 0 & & & & &  \\
  3/4 & 1/4 & 0 & 0 & 0 & 1/4 & 1/4 & 0 & & & &  \\
  1/5 & 1/5 & 0 & 0 & 0 & 0 & 0 & 0 & 0 & & &  \\
  2/5 & 1/5 & 0 & 0 & 0 & 0 & 0 & 0 & 1/5 & 0 & &  \\
  3/5 & 1/5 & 0 & 0 & 0 & 0 & 0 & 0 & 1/5 & 1/5 & 0 &  \\
  4/5 & 1/5 & 0 & 0 & 0 & 0 & 0 & 0 & 1/5 & 1/5 & 1/5 & 0 \\ \hline
  & 0 & -4/3 & 27/4 & 27/4 & -32/3 & -32/3 & -32/3 & 125/24 & 125/24 & 125/24 & 125/24. \\ 
\end{tabular}
\end{align}
}

\medskip
The intermediate stages (written in Shu-Osher and Butcher form) are as follows:
\begin{align*}
y^{(1)} &= u^n  &\approx &u(t^n), \\
y^{(2)} &= y^{(1)}   + \frac{1}{2} \Delta t F(y^{(1)}) & \approx & u(t^n+\Delta t/2), \\
y^{(3)} &= y^{(1)}   + \frac{1}{3} \Delta t F(y^{(1)}) & \approx & u(t^n+\Delta t/3), \\ 
y^{(4)} &= y^{(3)}   + \frac{1}{3} \Delta t F(y^{(3)})
= y^{(1)}+\frac{\Delta t}{3} [F(y^{(1)}) + F(y^{(3)})] & \approx & u(t^n+2\Delta t/3), \\
y^{(5)} &= y^{(1)}   + \frac{1}{4} \Delta t F(y^{(1)}) & \approx & u(t^n+\Delta t/4), \\
y^{(6)} &= y^{(5)}   + \frac{1}{4} \Delta t F(y^{(5)})
= y^{(1)}+\frac{\Delta t}{4} [F(y^{(1)}) + F(y^{(5)})] & \approx & u(t^n+\Delta t/2), \\
y^{(7)} &= y^{(6)}   + \frac{1}{4} \Delta t F(y^{(6)})
= y^{(1)}+\frac{\Delta t}{4} [F(y^{(1)}) + F(y^{(5)}) + F(y^{(6)})] & \approx & u(t^n+3\Delta t/4), \\
y^{(8)} &= y^{(1)}   + \frac{1}{5} \Delta t F(y^{(1)}) & \approx & u(t^n+\Delta t/5), \\
y^{(9)} &= y^{(8)}   + \frac{1}{5} \Delta t F(y^{(8)})
= y^{(1)}+\frac{\Delta t}{5} [F(y^{(1)}) + F(y^{(8)})] & \approx & u(t^n+2\Delta t/5), \\
y^{(10)} &= y^{(9)}  + \frac{1}{5} \Delta t F(y^{(9)})
= y^{(1)}+\frac{\Delta t}{5} [F(y^{(1)}) + F(y^{(8)}) + F(y^{(9)})] &\approx & u(t^n+3\Delta t/5), \\
y^{(11)} &= y^{(10)} + \frac{1}{5} \Delta t F(y^{(10)})
= y^{(1)}+\frac{\Delta t}{5} [F(y^{(1)}) + F(y^{(8)}) + F(y^{(9)}) + F(y^{(10)})]& \approx & u(t^n+4\Delta t/5). 
\end{align*}
Note that if $F(\cdot)$ is a BP spatial discretization and $ u^n$ is BP, 
then each stage of this ExE-RK method
is BP under appropriate time step restrictions. The approximations $y^{(1)}$, $y^{(2)}$, $y^{(3)}$, $y^{(5)}$, and $y^{(8)}$ are BP because they correspond to forward Euler updates of $u^n$. The remaining stages are BP since $y^{(m)}$ is a forward Euler update of a BP approximation
$y^{(r)}$ for some $r\in\{1,\ldots,m-1\}$.

The Aitken-Neville interpolation yields the temporally
5th-order approximation
\begin{align*}
u^{\rm RK} = 
&\frac{1}{24}\left[y^{(1)}+\Delta t F(y^{(1)})\right]
-\frac{8}{3}\left[y^{(2)}+\frac{1}{2}\Delta t F(y^{(2)})\right]
+\frac{81}{4}\left[y^{(4)}+\frac{1}{3}\Delta t F(y^{(4)})\right]\\
&-\frac{128}{3}\left[y^{(7)}+\frac{1}{4}\Delta t F(y^{(7)})\right]
+\frac{625}{24}\left[y^{(11)}+\frac{1}{5}\Delta t F(y^{(11)})\right].
\end{align*}
Note that this Euler extrapolation method combines $S=5$ first-order approximations of $u^{n+1}$. 
Since this combination is not convex, $u^{\rm RK}$ is not necessarily BP even if
$y^{(1)},\ldots,y^{(11)}$ are BP. To
enforce the BP property of the final solution, we perform flux limiting using
the Butcher form representation
\begin{align}\label{update}
u^{\rm RK} = u^n + \Delta t 
\Big[
  &-\frac{4}{3}F(y^{(2)}) + \frac{27}{4}F(y^{(3)}) + \frac{27}{4}F(y^{(4)}) \nonumber
  - \frac{32}{3}F(y^{(5)}) - \frac{32}{3}F(y^{(6)}) -\frac{32}{3}F(y^{(7)}) \\
  &+ \frac{125}{24}F(y^{(8)}) + \frac{125}{24}F(y^{(9)}) + \frac{125}{24}F(y^{(10)} + \frac{125}{24}F(y^{(11)})
\Big].
\end{align}

\subsection{Sixth-order (RK76) RK method}\label{sec:RK76}
This 6th-order RK method, proposed in \cite{rkmethods}, consists of seven stages and has the Butcher tableau
\begin{align}\label{RK76}
\begin{tabular}{c|ccccccc}
  0 & 0 & & & & & & \\
  1/3 & 1/3 & 0 & & & & & \\
  2/3 & 0 & 2/3 & 0 & & & & \\
  1/3 & 1/12 & 1/3 & -1/12 & 0 & & & \\
  1/2 & -1/16 & 9/8 & -3/16 & -3/8 & 0 & & \\
  1/2 & 0 & 9/8 & -3/8 & -3/4 & 1/2 & 0 & \\
  1   & 9/44 & -9/11 & 63/44 & 18/11 & 0 & -16/11 & 0\\ \hline
  & 11/120 & 0 & 27/40 & 27/40 & -4/15 & -4/15 & 11/120.
\end{tabular}
\end{align}
The intermediate stages of this method are not Euler steps. If we require them to be BP, the numerical fluxes $H_{ij}$ should be constrained using the limiters from Section \ref{sec:stagewise} in each stage. The BP property of the final solution $u^{\rm RK}$ can be enforced similarly using the limiter from Section \ref{sec:limiting_last}.

\section{Flux-limited WENO discretization in 1D}\label{appendix:one-dim_GMC_ExRK}
In this appendix, we provide details of flux-corrected RK methods for one-dimensional
hyperbolic conservation laws of the form $\pd{u}{t}+\pd{f(u)}{x}=0$. Although the
underlying low-order and high-order approximations are of little interest
{\it per se}, we present their one-dimensional formulations as well.
We assume the mesh is uniform and, therefore, the mesh size
$|K_i|=\Delta x$ is constant.

\subsection{The low-order method}\label{appendix:low_order}

In one space dimension, the common interface $S_{ij}$ of control
volumes with indices $i$ and
$j=i+1$ is the point $x_{i+1/2}=\frac12(x_i+x_{i+1})$. The 1D version of
the FE-LLF approximation \eqref{bp-update} is given by
\begin{align}\label{low_order_1D}
  u_i^{\rm FE}=u_i^n - \frac{\Delta t}{\Delta x}[H^{\rm FE}_{i+1/2}-H^{\rm FE}_{i-1/2}],
\end{align}
where 
\begin{align*}
  H^{\rm FE}_{i+1/2}=\frac{f(u_i^n)+f(u_{i+1}^n)}{2} -\lambda_{i+1/2}
  \frac{u_{i+1}^n - u_i^n}{2}
\end{align*}
is the first-order numerical flux and $\lambda_{i+1/2}$ is an upper bound
for the wave speed of the Riemann problem associated with the states
$u_i^n$ and $u_{i+1}^n$. The bar state form
of \eqref{low_order_1D}  is given by
\begin{align*}
  u_i^{\rm FE}=u_i^n + \frac{\Delta t}{\Delta x}[\lambda_{i+1/2}(\bar{u}^L_{i+1/2}-u_i^n)+\lambda_{i-1/2}(\bar{u}^L_{i-1/2}-u_i^n)]
=u_i^n + \frac{\Delta t}{\Delta x}d_i(\bar u_i^L-u_i^n),
\end{align*}
where
\begin{gather*}
  \bar u_{i+1/2}^L = \frac{u_{i+1}^n+u_i^n}{2} 
  - \frac{f(u_{i+1}^n)-f(u_i^n)}{2\lambda_{i+1/2}}, \qquad
  \bar u_{i-1/2}^L = \frac{u_i^n+u_{i-1}^n}{2} 
  + \frac{f(u_{i-1}^n)-f(u_i^n)}{2\lambda_{i-1/2}},\\
  \bar u_i^L=\frac{1}{d_i}[\lambda_{i+1/2}\bar u_{i+1/2}^L
  +\lambda_{i-1/2}\bar u_{i-1/2}^L],\qquad
  d_i = \lambda_{i+1/2}+\lambda_{i-1/2}.
\end{gather*}

This representation proves that the 
 method is bound-preserving (BP) with respect to the local bounds 
$u_i^{\max}=\max\{u_{i-1}^n,u_i^n,u_{i+1}^n\}$ and $u_i^{\min}=\min\{u_{i-1}^n,u_i^n,u_{i+1}^n\}$,
provided  
\beq
\Delta t\leq \frac{\Delta x}{\lambda_{i+1/2}
  +\lambda_{i-1/2}}.
\eeq
However, the method is only first-order accurate in space and time.

\subsection{The baseline high-order method}\label{appendix:rk_method}

The unlimited form of an explicit Runge-Kutta method (with a WENO discretization) is given by 
\begin{align}\label{high_order_1D}
  u_i^{\rm RK}=u_i^n-\frac{\Delta t}{\Delta x}[H_{i+1/2}^{\text{RK}}-H_{i-1/2}^{\text{RK}}],\qquad   H_{i+1/2}^{\text{RK}} = \sum_{m=1}^M b_m H_{i+1/2}^{(m)},
\end{align}
where 
$M$ is the number of stages of the RK method and $b_m$ are the Butcher weights of the final RK update. The  Lax-Friedrichs flux $H_{i+1/2}^{(m)}=H\big(\hat y_i^{(m)}(x_{i+1/2}),\hat y_{i+1}^{(m)}(x_{i+1/2}),1\big)$ is calculated using the interface values of the high-order WENO reconstructions $\hat y^{(m)}_i(x)$ and $\hat y^{(m)}_{i+1}(x)$ in cells $K_{i}$ and $K_{i+1}$, respectively. 
These WENO reconstructions are obtained from the $m$-th stage 
cell averages
\beq
y_i^{(m)} = u_i^n-\frac{\Delta t}{\Delta x}
\sum_{s=1}^{m-1}
a_{ms}\big[H_{i+1/2}^{(s)}-H_{i-1/2}^{(s)}\big],
\eeq
where $a_{ms},\ s=1,\ldots,m-1$ are the coefficients of the $m$-th row 
in the Butcher tableau \eqref{butcher_tableau}. 
Method \eqref{high_order_1D} is high-order in space and time. In particular, 
we combine a fifth-order WENO spatial discretization with the three explicit RK methods in 
\ref{appendix:RK_methods}.
The use of WENO numerical fluxes produces a solution which is typically (almost) non-oscillatory. 
However, this solution is not BP in general. 

\subsection{Spatial GMC limiting in 1D}\label{sec:1Dspatial}
Let us now apply the GMC space limiters from Section \ref{sec:gmcl}
to the high-order semi-discretization 
\beq\label{high_order_semi_1D}
\Delta x \td{u_i}{t}
= -[H_{i+1/2}^{\rm WENO}-H_{i-1/2}^{\rm WENO}],
\eeq
where $H_{i+1/2}^{\rm WENO}=H(\hat u_i(x_{i+1/2}),\hat u_{i+1}(x_{i+1/2}),1)$ is the high-order WENO flux. Using the corresponding low-order
flux  $H_{i+1/2}^{\rm LLF}=H(u_i,u_{i+1},1)$, the baseline scheme
\eqref{high_order_semi_1D} can be written as 
\beq
\Delta x\td{u_i}{t} 
= -\big[\big(H_{i+1/2}^{\rm LLF}-F_{i+1/2}\big)
-\big(H_{i-1/2}^{\rm LLF}-F_{i-1/2}\big)\big].
\eeq
The one-dimensional GMC limiter
replaces the antidiffusive
flux $F_{i+1/2}=H_{i+1/2}^{\rm LLF}-H_{i+1/2}^{\rm WENO}$
by its limited counterpart
$F_{i+1/2}^*=\alpha_{i+1/2}F_{i+1/2}$. The
correction factor $\alpha_{i+1/2}$ is calculated 
as follows: 
\begin{subequations}\label{GMC_alpha_1D}
\begin{alignat}{2}
  P^+_i &= \max\{0,F_{i+1/2}\} + \max\{0,-F_{i-1/2}\}, &
  P^-_i &= \min\{0,F_{i+1/2}\} + \min\{0,-F_{i-1/2}\},\label{GMCP} \\
  Q_i^{+} &= d_i(u^{\max}_i-\bar u_i^L)+\gamma(u^{\max}_i-u_i^n)
,&
Q_i^{-} &= d_i(u^{\min}_i-\bar u_i^L)
+\gamma d_i(u^{\min}_i-u_i^n),\\
R^\pm_i &= \begin{cases} 
  1 &\mbox{if } \phantom{|}P^\pm_i\phantom{|}=0, \\
  \min\left\{1, \frac{Q_i^\pm}{P_i^\pm}\right\} & \mbox{if } |P^\pm_i|>0,
\end{cases} &
 \alpha_{i+1/2}&=
  \begin{cases}
    \min\{R^+_i,R^-_{i+1}\} & \mbox{ if }\ F_{i+1/2}> 0, \\
    \min\{R^-_i,R^+_{i+1}\} & \mbox{ if }\ F_{i+1/2}\le 0.
  \end{cases} \label{GMCR}
\end{alignat}
\end{subequations}
An SSP-RK time discretization of the flux-corrected WENO scheme
\beq
\Delta x\td{u_i}{t} 
= -\big[\big(H_{i+1/2}^{\rm LLF}-F_{i+1/2}^*\big)
-\big(H_{i-1/2}^{\rm LLF}-F_{i-1/2}^*\big)\big]
\eeq
is BP under the time step restriction
\beq\label{cfl1D}
\Delta t\leq \frac{\mu(1+\gamma)\Delta x}{\lambda_{i+1/2}
  +\lambda_{i-1/2}},
\eeq
where $\mu\in(0,1]$ is the SSP coefficient of the RK method
and $\gamma\ge 0$ is the GMC
  relaxation parameter.
  
\subsection{Space-time GMC limiting in 1D}\label{appendix:space_and_time_gmcl}
The final stage  \eqref{high_order_1D} of a baseline
high-order WENO-RK scheme  can be written as
\beq\nonumber
u_i^{\rm RK}=u_i^n-\frac{\Delta t}{\Delta x}\big[H_{i+1/2}^{\text{RK}}-H_{i-1/2}^{\text{RK}}\big]
=
u_i^n - \frac{\Delta t}{\Delta x}\big[(H^{\rm FE}_{i+1/2}-F^{\rm RK}_{i+1/2})
  -(H^{\rm FE}_{i-1/2}-F^{\rm RK}_{i-1/2}\big)\big],
\eeq
where $F^{\rm RK}_{i+1/2}=H_{i+1/2}^{\FE}-H_{i+1/2}^{\RK}$ is the raw antidiffusive flux. Using this representation, space-time GMC limiting can be performed as in \ref{sec:1Dspatial} using $H^{\rm FE}_{i+1/2}$ in place of $H^{\rm LLF}_{i+1/2}$ and $F^{\rm RK}_{i+1/2}$ in place of $F_{i+1/2}$. The implementation of GMC for intermediate RK stages is similar.

\begin{rmk}
  The FCT version of the space-time limiter uses $u_i^{\rm FE}$ defined by
  \eqref{low_order_1D} to construct
  \beq\nonumber
  Q_i^{+} = \frac{\Delta x}{\Delta t}(u^{\max}_i-u_i^{\rm FE}), \qquad
  Q_i^{-} = \frac{\Delta x}{\Delta t}(u^{\min}_i-u_i^{\rm FE})
  \eeq
  for calculation of the correction factors $R_i^\pm$ and
  $\alpha_{ij}$ in \eqref{GMCR}.
  
\end{rmk}

\end{document}